\newtheorem{theorem}{Theorem}[section]
\newtheorem{lemma}[theorem]{Lemma}
\newtheorem{corollary}[theorem]{Corollary}
\newtheorem{definition}[theorem]{Definition}
\theoremstyle{remark}
\newtheorem{conjecture}[theorem]{\bf Conjecture}
\newtheorem{remark}[theorem]{\bf Remark}
\newtheorem{example}[theorem]{\bf Example}
\renewcommand{\leq}{\leqslant}
\renewcommand{\geq}{\geqslant}
\newcommand{\ptl}{\partial}
\newcommand{\rr}{\mathbb{R}}
\newcommand{\nn}{\mathbb{N}}
\DeclareMathOperator{\inte}{int}
\numberwithin{equation}{section}
\begin{document}

\title[Minimizing subdivisions for the maximum relative diameter]
{Subdivisions of rotationally symmetric planar convex bodies minimizing
the maximum relative diameter}

\author[A. Ca\~nete]{Antonio Ca\~nete}
\address{Departamento de Matem\'atica Aplicada I \\ Universidad de Sevilla}
\email{antonioc@us.es}

\author[U. Schnell]{Uwe Schnell}
\address{Faculty of Mathemics and Sciences \\ University of Applied Sciences Zittau/G{\"or}litz}
\email{uschnell@hszg.de}

\author[S. Segura Gomis]{Salvador Segura Gomis}
\address{Departamento de An\'alisis Matem\'atico \\ Universidad de Alicante}
\email{salvador.segura@ua.es}

\subjclass[2010]{52A10, 52A40}
\keywords{partitioning problems, $k$-rotationally symmetric planar convex body, maximum relative diameter}
\date{\today}

\begin{abstract}
In this work we study subdivisions of $k$-rotationally symmetric planar convex bodies
that minimize the maximum relative diameter functional. For some particular subdivisions called $k$-partitions,
consisting of $k$ curves meeting in an interior vertex, we prove that the so-called \emph{standard $k$-partition}
(given by $k$ equiangular inradius segments) is minimizing for any $k\in\nn$, $k\geq 3$. For general subdivisions,
we show that the previous result only holds for $k\leq 6$. We also study the optimal set for this problem,
obtaining that for each $k\in\nn$, $k\geq 3$, it consists of the intersection of the unit circle
with the corresponding regular $k$-gon of certain area.
Finally, we also discuss the problem for planar convex sets and large values of $k$,
and conjecture the optimal $k$-subdivision in this case.
\end{abstract}

\maketitle

\section{Introduction}

The study of the classical geometric magnitudes associated to a planar compact convex set
(perimeter, area, inradius, circumradius, diameter and minimal width) is one of the main points
of interest for Convex Geometry, and these magnitudes have been considered since ancient times.
They appear in the origin of this mathematical field,
and in fact, there is a large variety of related problems,
providing a more complete understanding of these geometric measures.

One of the most common problems involving these magnitudes
consists of finding precise relations between some of them (and not only in the convex framework).
For instance, the well-known isoperimetric inequality relates the perimeter and the area functionals,
stating that, among all planar sets with fixed enclosed area,
the circle is the one with the least possible perimeter \cite{S,O}.
In the same direction, the isodiametric inequality
asserts that the circle is also the planar compact convex set of prescribed area
with the minimum possible diameter \cite{bieberbach}.
Another example is given by the equilateral triangle,
which is the planar compact convex set with minimum enclosed area for fixed minimal width \cite{pa}.
We point out that these relations are usually established by means of appropriate general inequalities,
with the characterization of the equality case providing then the optimal set for the corresponding functionals
(that is, the set attaining the minimum possible value for the functional).
A nice detailed description of the relations between two (and three) of these classical magnitudes can be found in~\cite{sa}.

Apart from the above approach, these geometric functionals can be studied in a large range of different related problems.
One of these problems, which has been considered in several works during the last years, is the so-called \emph{fencing problem}, or more generally, the \emph{partitioning problem}. For these problems, the main interest is determining the division of the given set into $k$ connected subsets (of equal or unequal areas) which minimizes (or maximizes) a particular geometric magnitude \cite{cfg}. The relative isoperimetric problem is one of these questions, where the goal is minimizing the perimeter of the division curves for fixed enclosed areas. For this problem, the general geometric properties satisfied by the minimizing divisions can be deduced by using a variational approach, and particular results characterizing the solutions have been obtained for some regular polygons and for the circle, see \cite{tomonaga, bleicher, cr, cox}.

Regarding the diameter functional, the partitioning problem can be studied by means of the \emph{maximum relative diameter} functional.
We recall that for a planar compact convex set $C$ and a decomposition of $C$ into $k$ connected subsets $C_1,\ldots,C_k$,
the maximum relative diameter associated to the decomposition
is defined as the maximum of the diameters of the subsets $C_i$, for $i=1,\dots,k$
(this means that it measures the largest distance in the subsets provided by the decomposition).
In this setting, the partitioning problem seeks the decomposition of $C$
into $k$ subsets with the least possible value for the maximum relative diameter.
This problem has been recently studied for $k=2$ and $k=3$
under some additional symmetry hypotheses \cite{mps,trisecciones},
and the results therein constitute the main motivation for our work.

In \cite{mps}, the previous problem is treated for \emph{centrally symmetric} planar compact convex sets,
and for decompositions into two subsets of equal areas,
proving that the minimizing division for the maximum relative diameter
is always given by a line segment passing through the center of symmetry of the set~\cite[Prop.~4]{mps}.
We point out that a more precise characterization of such a minimizing segment is not provided.
Furthermore, the optimal set for this problem (that is, the centrally symmetric planar compact convex set attaining
the minimum possible value for the maximum relative diameter) is also determined up to dilations,
consisting of a certain intersection of two unit circles and a strip bounded by two parallel lines, see \cite[~Th.~5]{mps}.

In \cite{trisecciones}, the problem is considered for \emph{$3$-rotationally symmetric}
planar compact convex sets, and for divisions into three subsets of equal areas.
The main result establishes that the so-called \emph{standard trisection},
determined by three equiangular inradius segments,  minimizes the maximum relative diameter \cite[Th.~3.5 and Prop.~5.1]{trisecciones} for any set of that class.
In addition, the optimal set for this problem is also characterized, up to dilations,
as the intersection of the unit circle with a certain equilateral triangle \cite[Th.~4.7]{trisecciones}.
We remark that, although the results in \cite{trisecciones}
are stated for divisions into three subsets of \emph{equal areas},
all of them also hold in the case of unequal areas.

This paper is inspired precisely in these last two references
involving the maximum relative diameter~\cite{mps, trisecciones}.
As a natural continuation, we shall focus on the class $\mathcal{C}_k$ of
$k$-rotationally symmetric planar convex bodies (that is, compact convex sets) for each $k\in\nn$, $k\geq 3$.
And for each set in $\mathcal{C}_k$, we shall investigate the divisions into $k$ connected subsets
minimizing the maximum relative diameter functional.
In this setting we shall consider two different types of divisions,
namely \emph{$k$-subdivisions} and \emph{$k$-partitions}.
For a given set $C\in\mathcal{C}_k$,
a $k$-partition of $C$ will be a decomposition of $C$ into $k$ connected subsets
by $k$ curves starting at different points of $\ptl C$,
and with all of them meeting at an interior point in $C$.
On the other hand, a $k$-subdivision of $C$ will be a general decomposition of $C$
into $k$ connected subsets, with no additional restrictions.

The goal of this paper is studying the minimizing $k$-partitions and minimizing $k$-subdivisions for the maximum relative diameter, for any $k$-rotationally symmetric planar convex body.
In fact, we shall see that the so-called \emph{standard $k$-partition}
(consisting of $k$ inradius segments symmetrically placed) is a minimizing $k$-partition for the maximum relative diameter,
for any $k\in\nn$, $k\geq 3$ (Theorem~\ref{th:minimizing}),
being also minimizing among $k$-subdivisions only when $k\leq 6$
(Theorem~\ref{th:minimizing-subdivision}).
For $k>7$ we shall provide some examples showing that
the standard $k$-partition is not a minimizing $k$-subdivision,
which suggests that, in those cases, a complete characterization is a difficult question.
In addition, we point out that the uniqueness of the minimizing $k$-partition
does not hold for this problem, as explained in Subsection~\ref{subsec:uniqueness},
since proper slight deformations of a minimizing $k$-partition are also minimizing.

Another interesting result in this paper is the characterization of the \emph{optimal bodies} for this problem.
For each $k\in\nn$, $k\geq3$, we are able to determine the $k$-rotationally symmetric planar convex body
attaining the minimum possible value for the maximum relative diameter, when considering $k$-partitions.
Taking into account the previous Theorem~\ref{th:minimizing}, for each set of this class,
that minimum value will be provided by the corresponding standard $k$-partition.
In Section~\ref{se:optimal} we give the description of these optimal bodies for each $k\geq 3$,
which, in general, consist of the intersections of the unit circle with a certain regular $k$-gon, up to dilations.

We remark that the case $k=2$ (treating the decompositions into two subsets
for centrally symmetric planar convex bodies and previously studied in \cite{mps})
presents some differences with respect to the other situations,
as explained in Remark~\ref{re:2}, and cannot be treated as for $k\geq 3$.
The main reason for this particular behaviour lies in the following technical fact:
the associated standard $2$-partition of a set of this class, which is provided
by two inradius segments placed symmetrically, is just a line segment
passing through the center of symmetry of the set,
and so the computation of the maximum relative diameter associated to this standard $2$-partition
cannot be done by using Lemma~\ref{le:dM},
which is fundamental when $k\geq3$.

We have organized this paper as follows. Section~\ref{sec:preliminaries}
contains the precise definitions and the setting of the problem,
and in Section~\ref{sec:standard} we describe the standard $k$-partition
for any $k$-rotationally symmetric planar convex body.
This $k$-partition is constructed by applying successively the existing rotational symmetry
to an inradius segment of the set.
We remark that Lemma~\ref{le:dM} establishes how to compute the maximum relative diameter for the standard $k$-partition.

In Section~\ref{sec:main} we obtain the main results of this paper. Due to Lemmata~\ref{le:dM},~\ref{le:cota1} and~\ref{le:cota2}, we prove our Theorem~\ref{th:minimizing}:
\begin{quotation}
\emph{For any $k$-rotationally symmetric planar convex body,
the associated standard $k$-partition is a minimizing $k$-partition for the maximum relative diameter.}
\end{quotation}
Furthermore, regarding general $k$-subdivisions, we obtain Theorem~\ref{th:minimizing-subdivision}:
\begin{quotation}
\emph{For any $k$-rotationally symmetric planar convex body, the associated standard $k$-partition is a minimizing $k$-subdivision for the maximum relative diameter, when $k\leq6$.}
\end{quotation}
\begin{figure}[h]
  \includegraphics[width=0.6\textwidth]{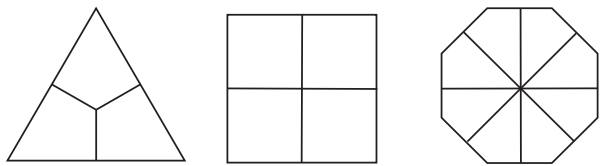}\\
  \caption{The standard $3$-partition for the equilateral triangle, the standard $4$-partition for the square,
  and the standard $8$-partition for the regular octagon}
\end{figure}

When $k\geq7$, Examples~\ref{ex:7} and \ref{ex:8} show that the standard $k$-partition
is not a minimizing $k$-subdivision in general, suggesting that the characterization of such minimizing $k$-subdivisions
is a difficult task which will depend on the particular considered set.
We also point out that, for $k=3$, the minimizing $3$-partition and $3$-subdivision in this setting
have been previously obtained, see \cite[Th.~3.5 and Prop.~5.1]{trisecciones}.

Moreover, in Subsection~\ref{subsec:uniqueness} we discuss the uniqueness of the solutions for this problem.
We will show with some examples that slight modifications of a minimizing $k$-partition in a proper way
(preserving the value of the maximum relative diameter) will produce minimizing $k$-partitions as well,
and so we do not have uniqueness in this setting (as in most of the problems involving the diameter functional).
This subsection concludes by posing some necessary conditions for a given $k$-partition to be minimizing,
derived from Lemma~\ref{le:dM} and Theorem~\ref{th:minimizing}.

In Section~\ref{se:optimal} we study the \emph{optimal bodies} for this problem.
More precisely, for each $k\in\nn$, $k\geq3$, we characterize
the $k$-rotationally symmetric planar convex body for fixed area
attaining the lowest possible value for the maximum relative diameter when considering $k$-partitions.
Notice that, in view of Theorem~\ref{th:minimizing}, we have to focus on the standard $k$-partitions,
and as a consequence of Lemma~\ref{le:dM} we are able to prove our Theorem~\ref{th:optimal}:
\begin{quotation}
\emph{For any $k\in\nn$, $k\geq 3$, the minimum value for the maximum relative diameter for $k$-partitions
in the class of $k$-rotationally symmetric planar convex bodies is uniquely attained
by the standard $k$-partition associated to the intersection of the unit circle and the regular $k$-gon
with inradius equal to $1/(2\sin(\pi/k))$, up to dilations.}
\end{quotation}
The previous Theorem~\ref{th:optimal} characterizes the optimal body,
for each $k\in\nn$, $k\geq3$, as the intersection of the unit circle with a particular regular $k$-gon.
In Subsection~\ref{re:description} we analyze these intersections in each case, obtaining the precise optimal sets:
for $k=3$ and $k=5$, the optimal bodies are respectively an equilateral triangle and a regular pentagon
\emph{with rounded vertices}, for $k=4$ we have a square, and for $k\geq6$ the best set in this setting
is the unit circle. We remark that the optimal body for $k=3$ was already characterized in~\cite[Th.~4.7]{trisecciones}
by means of a constructive procedure.

\begin{figure}[h]
  \includegraphics[width=0.85\textwidth]{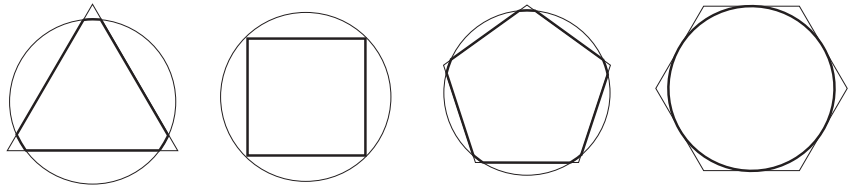}\\
  \caption{Optimal sets for $k=3$, $k=4$, $k=5$, and $k\geq6$}
\end{figure}

Finally, in Section~\ref{sec:large} we complete our study by considering large values of $k\in\nn$,
discussing our problem for $k$-subdivisions (focusing on planar convex bodies).
Recall that Theorem~\ref{th:minimizing} establishes that the standard $k$-partitions
are minimizing $k$-partitions for any $k\in\nn$, $k\geq 3$, and minimizing $k$-subdivisions only when $k\leq6$,
for any $k$-rotationally symmetric planar convex body.
In Corollary~\ref{co:cota} we obtain a lower bound for the maximum relative diameter functional
when considering $k$-subdivisions of a planar convex body,
involving a negligible addend when $k$ is large enough.
However, we think that such lower bound can be improved,
and some reasonings in this section lead us to Conjecture~\ref{conj:infinito}:
for large values of $k\in\nn$, a minimizing $k$-subdivision will be given
by a decomposition induced by a planar tiling of regular hexagons.

\section{Preliminaries}
\label{sec:preliminaries}

In this paper we will consider, for any given $k\in\nn$, $k\geq 3$, the class $\mathcal{C}_k$
of $k$-rotationally symmetric planar convex bodies (assuming therefore the compactness of the sets).
We recall that, in this setting, each set $C$ in the class $\mathcal{C}_k$ has a center of symmetry $p$,
and then, the $k$-rotationally symmetry means that $C$ is invariant
under the rotation centered at $p$ with angle $\varphi_k:=2\pi/k$.
Some examples of sets belonging to $\mathcal{C}_k$ are the regular $k$-gon and the Reuleaux $k$-gon.

%\vspace{1mm}
\begin{figure}[h]
  \includegraphics[width=0.8\textwidth]{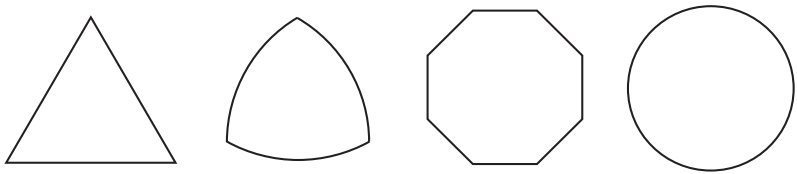}\\
  \caption{Some examples of $k$-rotationally symmetric planar convex bodies:
  the equilateral and Reuleaux triangles are $3$-rotationally symmetric,
  the regular octogon is $8$-rotationally symmetric (and also $4$-rotationally symmetric),
  and the circle is $k$-rotationally symmetric for any $k\in\nn$. }
\end{figure}

Given a planar compact set $C$, we shall denote by $r(C)$ the \emph{inradius} of $C$
(that is, the radius of the largest ball contained in $C$),
and by $R(C)$ the \emph{circumradius} of $C$ (radius of the smallest ball containing $C$).
If no confusion may arise, we shall simply denote them by $r$ and $R$, respectively.
For $k$-rotationally symmetric planar convex bodies, we have the following Lemma,
which provides a metric characterization for the inradius and the circumradius.

\begin{lemma}
\label{le:rR}
Let $C\in\mathcal{C}_k$, and let $p$ be its center of symmetry.
Then, the inradius $r$ coincides with the minimal Euclidean distance between $p$ and a point in $\ptl C$,
and the circumradius $R$ coincides with the maximal Euclidean distance between $p$ and a point in $\ptl C$.
\end{lemma}

\begin{proof}
It is known that the set $I$ of centers of all largest balls contained in $C$
is a convex set with empty interior \cite{eugenia}.
As $C$ is $k$-rotationally symmetric, this implies that $I=\{p\}$
(note that for any $c\in I$, $c\neq p$, the images of $c$ by the existing rotations would
belong to $I$, and so $p$ would be an interior point of $I$).
Then there is a unique largest ball contained in $C$, which is centered at $p$.
This immediately yields the statement for the inradius.

On the other hand, as the smallest ball containing $C$ is unique, its center has to coincide with $p$
(otherwise, an analogous rotation argument will contradict that uniqueness). This implies trivially
the statement for the circumradius.

\end{proof}

\begin{figure}[h]
    \includegraphics[width=0.28\textwidth]{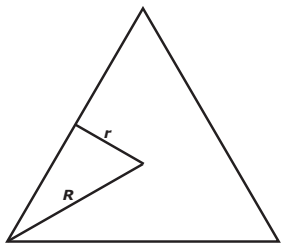}\\
  \caption{The inradius $r$ and the circumradius $R$ of an equilateral triangle.}
\end{figure}

We now define the notions of $k$-subdivision and $k$-partition of a planar compact set,
which will be considered along this work.
Notice that these definitions can be generally done %for planar compact sets,
without assuming additional hypotheses regarding convexity or symmetry.

\begin{definition}
Let $C$ be a planar compact set. A $k$-subdivision $S$ of $C$ is a decomposition of $C$ into $k$ connected subsets $\{C_1,\ldots,C_k\}$, not necessarily of equal areas, satisfying:
\begin{itemize}
\item[i)] $\displaystyle{C=\bigcup_{i=1}^k C_i}$,
\item[ii)] $\inte(C_i)\cap \inte(C_j)=\emptyset$, for $i,j\in\{1,\ldots,k\}$, $i\neq j$.
\end{itemize}

In addition, if a $k$-subdivision $S$ is given by $k$ curves meeting at a point $c\in\inte(C)$,
and the other endpoints of the curves are different points in $\ptl C$,
then $S$ will be called a $k$-partition of $C$.
In this case, $c$ is called the common point of the $k$-partition and the endpoints of the curves meeting $\ptl C$
will be called the endpoints of the $k$-partition.
\end{definition}

We stress that $k$-subdivisions are general decompositions of the original set $C$ into $k$ connected subsets,
and that $k$-partitions are $k$-subdivisions with a particular topological structure: they consist of $k$ curves
converging to an interior point, each of them meeting the boundary of $C$ at a different point.

\begin{figure}[h]
  \includegraphics[width=0.66\textwidth]{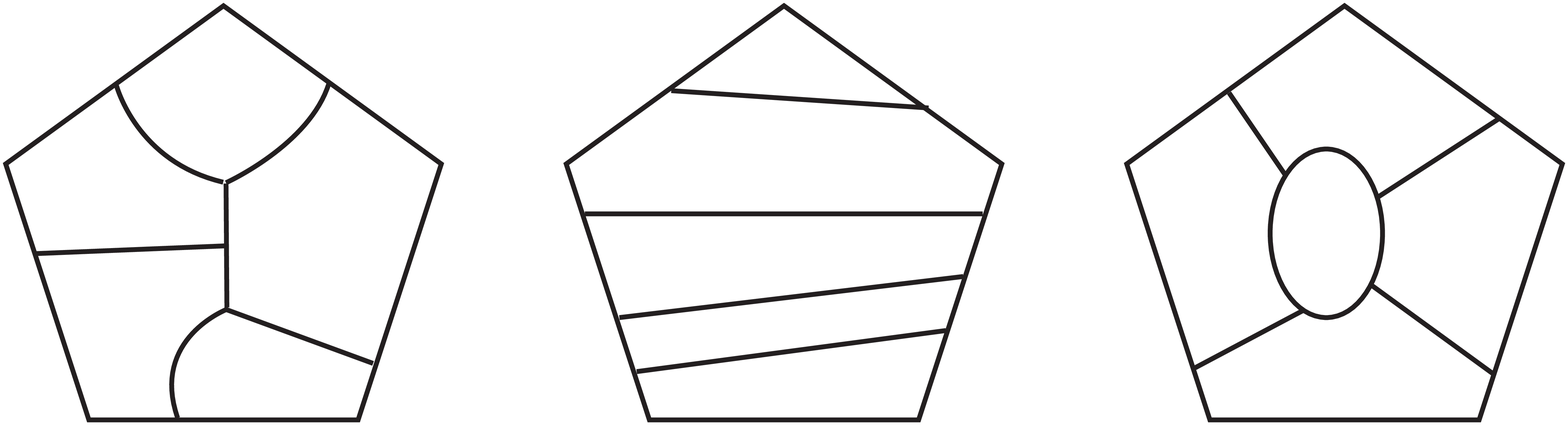}\\
  \caption{Three different $5$-subdivisions for the regular pentagon.}
\end{figure}

\begin{figure}[h]
  \includegraphics[width=0.66\textwidth]{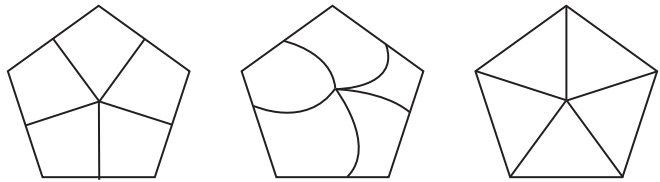}\\
  \caption{Three different $5$-partitions for the regular pentagon.}
\end{figure}

\begin{remark}
In this work, we are focusing on the class $\mathcal{C}_k$ of $k$-rotationally symmetric planar convex bodies
since the lack of the rotational symmetry prevents a general study of this question.
Moreover, for any set $C\in\mathcal{C}_k$, its center of symmetry is a remarkable point in the interior of $C$,
and so it seems natural to consider $k$-partitions, which are precisely
the $k$-subdivisions determined by interior points of $C$.
\end{remark}

We now recall the definition of the maximum relative diameter for a $k$-subdivision of a planar compact set.

\begin{definition}
Let $C$ be a planar compact set, and let $S$ be a $k$-subdivision of $C$ into subsets $\{C_1,\ldots,C_k\}$.
We define the \emph{maximum relative diameter} associated to $S$ as
\[
d_M(S,C)=\max\{D(C_i):i=1,\ldots,k\},
\]
where $D(C_i)=\max\{d(x,y):x,y\in C_i\}$ denotes the Euclidean diameter of $C_i$.
\end{definition}

\begin{remark}
Along this work, we shall usually write $d_M(S)$ instead of $d_M(S,C)$, if no confusion may arise.
\end{remark}

\begin{remark}
For a given planar compact set $C$ and a $k$-subdivision $S$ of $C$,
the existence of $d_M(S,C)$ is assured due to the continuity of the Euclidean distance $d:\rr^2\to\rr$,
and the compactness of $C$.
\end{remark}

\begin{remark}
By a standard % compactness
argument it is clear that $d_M(S,C)$ is attained
by points lying in the boundary of one of the subsets $C_i$ determined by
the $k$-subdivision $S$.
%We recall that $d_M(S,C)$ is attained by points lying in the boundary of one of the subsets $C_i$ determined by $S$
%(see \cite[Lemma~1.8]{trisecciones}).
Moreover, it is well-known that the diameter of a convex polygon is given by the distance between two of its vertices.
\end{remark}

In the forthcoming sections, we shall investigate the $k$-partitions and the $k$-subdivisions
of a given $k$-rotationally symmetric planar convex body which minimize the maximum relative diameter functional.
In other words, for a given set $C\in\mathcal{C}$, we shall look for
\[
\min\{d_M(P,C): P\text{ is a $k$-partition of $C$\,}\},
\]
and
\[
\min\{d_M(S,C): S\text{ is a $k$-subdivision of $C$\,}\}.
\]

The $k$-partitions and $k$-subdivisions attaining those minimum values will be called \emph{minimizing}.
As usual in this kind of optimization problems involving the diameter functional,
we point out that the uniqueness of the minimizing $k$-subdivision is not expected,
as we shall see in Subsection~\ref{subsec:uniqueness}.

\section{Standard $k$-partition}
\label{sec:standard}

In this section we shall describe a particular $k$-partition for the sets of our class $\mathcal{C}_k$,
called the \emph{standard $k$-partition}, by means of a simple geometrical intrinsic construction.
This $k$-partitions will play an important role along this work.

For $C\in\mathcal{C}_k$, let $x_1\in\ptl C$ be a point in $\ptl C$
with minimal distance to the center of symmetry $p$.
In view of Lemma~\ref{le:rR}, this implies that $d(p,x_1)$ is the inradius of $C$,
and so the segment $\overline{p\,x_1}$ will be referred to as \emph{an inradius segment} of $C$
(notice that for regular polygons, such a segment is called apothem).
By applying $k-1$ times the rotation centered at $p$ with angle $\varphi_k=2\pi/k$ to the point $x_1$,
we shall finally obtain a set of points $\{x_1,\ldots, x_k\}\subset\ptl C$, all of them minimizing the distance to $p$.
If we join these points to $p$, we shall get $k$ inradius segments dividing $C$ into $k$ connected congruent subsets.
This $k$-partition will be called \emph{standard k-partition} of $C$, and will be denoted by $P_k(C)$, or simply $P_k$.

\begin{figure}[h]
  \includegraphics[width=0.7\textwidth]{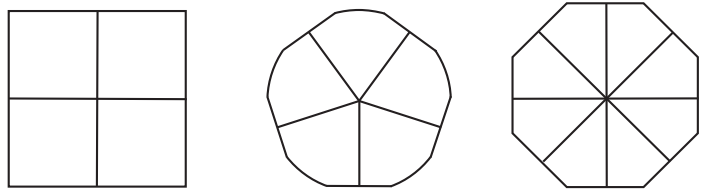}\\
  \caption{Standard $k$-partitions for different $k$-rotationally symmetric planar bodies}
\end{figure}

\begin{remark}
In general, the previous construction of the standard $k$-partition
associated to a $k$-rotationally symmetric planar convex body $C$ is not uniquely determined.
In fact, if there are more than $k$ points in $\ptl C$ attaining the minimal distance
to the center of symmetry of $C$, then the associated standard $k$-partition is not unique
(for instance, this happens for the standard $3$-partition of the regular hexagon,
or for the standard $k$-partition of the circle for any $k\geq 2$).
In any case, all the standard $k$-partitions associated to $C$ will be \emph{$k$-rotationally congruent} by construction.
\end{remark}

For the standard $k$-partition of a given set $C$ in $\mathcal{C}_k$,
the associated maximum relative diameter can be easily computed in terms
of the inradius and the circumradius of $C$, as proved in the following Lemma~\ref{le:dM}.

\begin{lemma}
\label{le:dM}
Let $C$ be a $k$-rotationally symmetric planar convex body, with inradius $r$ and circumradius $R$.
Let $P_k$ be the standard $k$-partition associated to $C$. Then,
\[
d_M(P_k,C)=\max\{R,2\,r\sin(\varphi_k/2)\}.
\]
\end{lemma}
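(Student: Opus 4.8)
The plan is to analyze the geometry of a single cell of the standard $k$-partition and reduce the diameter computation to identifying which pairs of boundary points realize the maximum distance. By the $k$-rotational congruence of the cells established in Section~\ref{sec:standard}, it suffices to compute the diameter of one cell $C_1$; the maximum relative diameter is then exactly $D(C_1)$. Fix the cell bounded by the two consecutive inradius segments $\overline{p\,x_1}$ and $\overline{p\,x_2}$, where the angle $\angle x_1 p x_2 = \varphi_k$, together with the arc of $\ptl C$ joining $x_1$ to $x_2$. The key structural observation I would use is the final remark preceding the statement: the diameter is attained by boundary points of the cell. So I must compare distances between points lying on the two straight edges and on the boundary arc.

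First I would locate the candidate diametral pairs. Since $|p\,x_1| = |p\,x_2| = r$ and these are the shortest distances from $p$ to $\ptl C$ (Lemma~\ref{le:rR}), every point $y$ on the boundary arc satisfies $r \le |p\,y| \le R$. The two most natural candidate distances are: (a) the chord $|x_1 x_2| = 2r\sin(\varphi_k/2)$, realized by the two endpoints of the inradius segments, obtained directly from the isosceles triangle $p\,x_1 x_2$; and (b) the distance from $p$ to the farthest boundary point of the cell, which is at most $R$ and is attained since the circumradius point lies in some cell. I would argue these two quantities dominate all other pairs. The cleanest route is to show that for any two points $u, v$ in the closed cell, $d(u,v) \le \max\{R, 2r\sin(\varphi_k/2)\}$, and that both values are actually achieved (the first by $x_1, x_2$, the second by $p$ together with a circumradius point on the arc).

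The main obstacle will be the upper bound $d(u,v) \le \max\{R, 2r\sin(\varphi_k/2)\}$ for an \emph{arbitrary} pair in the cell, since the boundary arc can bulge outward in a convex but otherwise unconstrained way. My strategy here is to use convexity of $C$ to control the cell's angular extent: the cell lies inside the infinite circular sector of angle $\varphi_k$ with apex $p$, intersected with the disk of radius $R$ centered at $p$. For such a sector-sector configuration, the farthest two points of the bounding region are governed by the half-angle $\varphi_k/2$. When $\varphi_k/2 \ge \pi/3$ (equivalently $k$ small), the angular spread is large and the chord between the extreme rays dominates, giving $2r\sin(\varphi_k/2)$ as the relevant bound; when $\varphi_k/2$ is small (large $k$), the sector is thin and the radial extent $R$ dominates. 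I would make this precise by bounding $d(u,v)^2$ via the law of cosines in terms of $|p\,u|$, $|p\,v| \in [r,R]$ and the enclosed angle $\theta \le \varphi_k$, then maximizing the resulting expression over the feasible region; the maximum occurs at a corner of the region, yielding exactly one of the two stated values.

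Finally I would verify attainment to confirm the max is not a strict overestimate. The value $2r\sin(\varphi_k/2)$ is attained by $x_1$ and $x_2$, which are genuine boundary points of the cell. The value $R$ is attained by the center $p$ and a point of $\ptl C$ at maximal distance $R$ (Lemma~\ref{le:rR}); I would note that such a farthest boundary point lies in the closure of some cell, and by $k$-rotational congruence we may take it in $C_1$, so $R$ is realized as a relative diameter of a cell. Combining the upper bound with these two attainment facts gives $d_M(P_k, C) = \max\{R, 2r\sin(\varphi_k/2)\}$, as claimed.
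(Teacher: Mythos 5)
Your reduction to a single cell and your attainment step are both fine (and match the paper), but the upper-bound step --- the heart of the lemma --- has a genuine gap. The containment you propose, namely that the cell lies in the sector of angle $\varphi_k$ at $p$ intersected with the disk of radius $R$, is too weak when $k$ is small. If you maximize $d(u,v)^2=a^2+b^2-2ab\cos\theta$ over the feasible region you describe, $a,b\in[r,R]$ and $\theta\in[0,\varphi_k]$, the maximum sits at the corner $a=b=R$, $\theta=\varphi_k$, so your method yields only $D(C_1)\le 2R\sin(\varphi_k/2)$. This is \emph{not} one of the two stated values: it agrees with $\max\{R,2r\sin(\varphi_k/2)\}$ only when $R=r$ or $\sin(\pi/k)\le 1/2$ (i.e.\ $k\ge 6$), and for $k\in\{3,4,5\}$ and any body with $R>r$ it is strictly larger than both candidates, so the lemma does not follow. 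A concrete instance: for the square ($k=4$, $R=\sqrt{2}\,r$) your bound gives $D(C_1)\le 2r$, while the lemma requires $D(C_1)=\sqrt{2}\,r$; moreover the quarter-disk of radius $\sqrt{2}\,r$ genuinely contains pairs of points at distance $2r$ (the two extreme points on the bounding rays), so no refinement of the maximization over that region alone can close the gap.

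What is missing is the constraint coupling the angle with the radii. Since the inball $B(p,r)$ touches $\ptl C$ at $x_1$ and $x_2$, any supporting line of $C$ at $x_i$ is tangent to that ball, hence perpendicular to the inradius segment $\overline{p\,x_i}$; therefore the cell also lies in the two half-planes $\{z:\langle z-x_i,x_i-p\rangle\le 0\}$. These cut off exactly the offending corner of your region: a point of the cell at angular distance $\delta$ from the ray through $x_1$ has radius at most $r/\cos\delta$, so two points with angular separation close to $\varphi_k$ are forced to be radially close to $r$, and the extremal chord becomes $d(x_1,x_2)=2r\sin(\varphi_k/2)$ rather than $2R\sin(\varphi_k/2)$. (This also shows the dominant term cannot be decided by the angle alone, as in your dichotomy: for $k=4$ the maximum is $2r\sin(\pi/4)$ for the circle but $R$ for the square, so it depends on the ratio $R/r$ as well.) The paper's proof routes around this differently: it notes that a diametral pair of $C_1$ cannot lie in the relative interior of an inradius segment, and then invokes that the maximal distance between two points of the outer arc $\ptl C\cap\ptl C_1$ equals $d(x_1,x_2)$, while any pair involving $p$ has distance at most $R$; that invoked fact is precisely where the supporting lines above are needed. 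Adding the two half-plane constraints to your feasible region and redoing the maximization would repair your argument.
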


\begin{remark}
Recall that $\varphi_k/2=\pi/k$.
\end{remark}

\begin{proof}
We can assume that $d_M(P_k,C)=D(C_1)$, where $C_1$ is one of the congruent subsets determined by $P_k$.
Call $x_1,x_2\in\ptl C_1$ the endpoints of the two inradius segments bounding $C_1$.
Any pair of points realizing $D(C_1)$ cannot be contained in the relative interior of an inradius segment
(in that case, the diameter could be clearly increased by considering some proper point along the segment).
As the maximal distance between two points on $\ptl C\cap\ptl C_1$ is $d(x_1,x_2)=2\,r\sin(\pi/k)$,
and the maximal distance between $p$ and $\ptl C$ equals $R$, the statement follows.
\end{proof}

The following examples %(regarding the circle and the regular polygons)
show that both possibilities from Lemma~\ref{le:dM} may occur.

\begin{example} (Circle)
\label{ex:dMcircle}
Any circle is a $k$-rotationally symmetric planar convex body for any $k\in\nn$, $k\geq 2$,
whose inradius $r$ coincides with its circumradius $R$. Consequently, in view of Lemma~\ref{le:dM},
it is immediate checking that $d_M(P_k)=2\,r\sin(\varphi_k/2)$ for $k\in\{3,4,5,6\}$,
and $d_M(P_k)=R$ for $k\geq 6$.
\end{example}

\begin{example}(Regular polygons)
\label{ex:rR}
For a regular $k$-gon $E_k$, Lemma~\ref{le:dM} yields that $d_M(P_k,E_k)=R(E_k)$,
for $k\in\nn$, $k\geq 3$, due to the well-known relation $r(E_k)=R(E_k)\cos(\varphi_k/2)$.
\end{example}

\begin{remark}
\label{re:2}
Lemma~\ref{le:dM} does not hold for $k=2$ (it can be easily checked by considering the square or the circle).
The reason of this fact is that the standard $2$-partition $P_2$
will divide any centrally symmetric planar convex body $C$ into two subsets
whose boundaries do not contain the center of symmetry $p$ as a \emph{vertex}.
Notice that $p$ will belong to the line segment $\overline{x_1x_2}$ (being $x_1,x_2$ the endpoints of $P_2$),
and so $p$ will not be relevant for computing $d_M(P_2,C)$. In fact, it can be checked that
\[
d_M(P_2,C)=\max\{d(x_1,x): x\in\ptl C\}.
\]
The main difficulty in this case is that the quantity $d(x_1,x)$, when $x\in\ptl C$,
cannot be estimated in general,
which prevents using the arguments from this paper for $k=2$.
We recall that this case was studied in \cite{mps},
where it is proved that the minimizing $2$-partition for $d_M$
is given by a line segment passing through the center of symmetry of the set,
without a more precise description \cite[Prop.~4]{mps}.
\end{remark}

\section{Main results}
\label{sec:main}

In this section, we aim to compare the maximum relative diameters
associated to the standard $k$-partition $P_k$ and an arbitrary $k$-partition $P$
of a given $k$-rotationally symmetric planar convex body $C$, when $k\geq 3$.
By means of Lemmata~\ref{le:cota1} and~\ref{le:cota2},
we will conclude that $d_M(P_k,C)\leq d_M(P,C)$,
and so the standard $k$-partition always provides a minimizing $k$-partition
for the maximum relative diameter functional.
Furthermore, if we consider general $k$-subdivisions,
we will see that the same previous result holds only when $k\leq6$ (Theorem~\ref{th:minimizing-subdivision}),
showing also some counterexamples for $k\geq7$.
Finally, in Subsection~\ref{subsec:uniqueness} we will see that
the uniqueness of the minimizing $k$-partition cannot be expected for this problem.

\begin{lemma}
\label{le:cota1}
Let $C$ be a $k$-rotationally symmetric planar convex body, with circumradius $R$, for $k\geq 3$.
Let $P$ be a $k$-partition of $C$. Then $d_M(P)\geq R$.
\end{lemma}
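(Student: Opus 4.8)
The plan is to exploit the defining feature of a $k$-partition, namely the existence of a common interior point $c\in\inte(C)$ at which all $k$ dividing curves meet. Since these curves issue from $c$ and split $C$ into the $k$ connected pieces $C_1,\dots,C_k$, the point $c$ lies on the boundary of every piece, so that $c\in C_i$ for all $i$. Here I use that each $C_i$ is compact, hence contains its boundary and attains its diameter $D(C_i)$; this is guaranteed by the compactness conventions recorded in Section~\ref{sec:preliminaries}.

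Granting this, the argument reduces to a single distance estimate. Let $z\in C$ realize $\max\{d(c,x):x\in C\}$, and let $C_j$ be a piece containing $z$. Since $c\in C_j$ as well, both $c$ and $z$ lie in $C_j$, whence
\[
d_M(P)\ \geq\ D(C_j)\ \geq\ d(c,z)\ =\ \max_{x\in C}\, d(c,x).
\]
It therefore suffices to show $\max_{x\in C} d(c,x)\geq R$. This is precisely the characterization of the circumradius as the minimal enclosing radius: the closed ball centered at $c$ of radius $\rho:=\max_{x\in C}d(c,x)$ contains $C$, so by the definition of $R$ as the radius of the smallest ball containing $C$ we must have $\rho\geq R$. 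Chaining the two displayed estimates yields $d_M(P)\geq R$, as desired.

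I expect the only delicate point to be the topological claim that the common point $c$ belongs to \emph{every} subset $C_i$; this requires reading off carefully, from the definition of a $k$-partition, that $c$ is a shared boundary vertex of all $k$ regions rather than merely of two of them. Everything else is elementary, and I note that the argument uses neither the $k$-rotational symmetry of $C$ nor the metric description of $R$ from Lemma~\ref{le:rR}: the bound $d_M(P)\geq R$ holds for any $k$-partition of an arbitrary planar convex body, because the common interior point forces two points at mutual distance at least $R$ into one of the pieces.
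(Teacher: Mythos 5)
Your proof is correct, but it takes a genuinely different route from the paper's. The paper exploits the rotational symmetry: it picks $k$ symmetrically placed points $y_1,\ldots,y_k\in\partial C$ with $d(p,y_i)=R$ (their existence rests on Lemma~\ref{le:rR}), observes that the functional $S(x)=\sum_{i=1}^k d(x,y_i)^2$ has its unique minimum at the centroid $p$ with $S(p)=kR^2$, and deduces that the common point $c$ must satisfy $d(c,y_j)\geq R$ for some $j$; since $c$ and $y_j$ lie in a common piece, the bound follows. You instead use only the definition of the circumradius as the radius of the smallest enclosing ball: for \emph{any} point $c$, the ball $\bar B\bigl(c,\max_{x\in C}d(c,x)\bigr)$ contains $C$, so $\max_{x\in C}d(c,x)\geq R$, and the conclusion follows from the same two-points-in-one-piece observation. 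Your argument is more elementary and strictly more general -- it shows the lemma holds for any planar convex body (indeed any compact set), with no symmetry hypothesis, which the paper never points out. What the paper's variational argument buys is an explicit equality analysis that the authors reuse in Subsection~\ref{subsec:uniqueness}: since $p$ is the \emph{unique} minimizer of $S$, any $c\neq p$ forces $d_M(P)>R$ strictly, yielding the necessary condition that a minimizing $k$-partition with $d_M=R$ has its common point at $p$. Your approach recovers the same rigidity, though you did not state it: if $\max_{x\in C}d(c,x)=R$ then $\bar B(c,R)$ is an enclosing ball of radius $R$, and uniqueness of the smallest enclosing ball (already invoked in the proof of Lemma~\ref{le:rR}) forces $c=p$. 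One small point of care: your chain of inequalities only needs $c,z\in C_j$ and the diameter as a supremum, so the appeal to compactness of the pieces is a convenience, not a gap.
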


\begin{proof}
Let $c$ be the common point of $P$, and $C_1,\ldots,C_k$ the corresponding subsets of $C$ given by $P$.
Note that $c\in\bigcap_{i=1}^k C_i$. Call $p$ the center of symmetry of $C$,
and let $y_1,\ldots,y_k$ be points in $\ptl C$ such that $d(p,y_i)=R$
(we can assume they are placed $k$-symmetrically along $\ptl C$).
Then, it is not difficult to check that $p=\sum_{i=1}^k y_i/k$
is the unique minimum for the functional $S:C\to\rr$ given by
$$\displaystyle{S(x)=\sum_{i=1}^k d(x,y_i)^2},$$
and so
$$\displaystyle{S(c)=\sum_{i=1}^k} d(c,y_i)^2\geq S(p)=k R^2.$$
This necessarily implies that some $y_j$ satisfies $d(c,y_j)\geq R$,
yielding $d_M(P)\geq d(c,y_j)\geq R$, as desired.
\end{proof}

The above Lemma~\ref{le:cota1} also holds for $k$-subdivisions, when $k\leq 6$.
Notice that the proof is different from the one of Lemma~\ref{le:cota1},
since $k$-subdivisions do not have a common interior point as $k$-partitions.

\begin{lemma}
\label{le:cota1-subd}
Let $C$ be a $k$-rotationally symmetric planar convex body,
with circumradius $R$, for $3\leq k\leq 6$. Let $S$ be a $k$-subdivision of $C$.
Then $d_M(S)\geq R$.
\end{lemma}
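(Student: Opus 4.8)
The plan is to argue by contradiction: assume $d_M(S)<R$ and derive an impossibility from a pigeonhole count over the $k$ subsets. First I would use Lemma~\ref{le:rR} to select $k$ points $y_1,\dots,y_k\in\ptl C$ with $d(p,y_i)=R$, placed $k$-symmetrically (the images of a single farthest boundary point under the rotations by multiples of $2\pi/k$); since $p$ lies in the interior of $C$ while the $y_i$ lie on $\ptl C$, these are $k$ distinct points. They play the role of the \emph{extreme} data replacing the common interior point that was available for $k$-partitions in Lemma~\ref{le:cota1}.

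The crucial geometric input, and the reason for the hypothesis $k\leq 6$, is that the minimal pairwise distance among the $y_i$ equals $2R\sin(\pi/k)$, attained by adjacent points on the circle of radius $R$ centered at $p$. Because $\sin(\pi/k)\geq\sin(\pi/6)=1/2$ exactly when $k\leq 6$, in this range any two distinct extreme points satisfy $d(y_a,y_b)\geq 2R\sin(\pi/k)\geq R$. Consequently, under the assumption $d_M(S)<R$, no single subset $C_i$ can contain two distinct extreme points, for otherwise $D(C_i)\geq R$; thus each subset hosts at most one of the $y_i$.

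Next I would bring in the center. The point $p$ lies in some subset, say $C_1$; if $C_1$ contained any $y_j$ then $D(C_1)\geq d(p,y_j)=R$, again contradicting $d_M(S)<R$, so $C_1$ is free of extreme points. Since the decomposition covers $C$, every $y_j$ lies in at least one subset, hence in one of the remaining $k-1$ subsets $C_2,\dots,C_k$; as each of these hosts at most one $y_j$, the $k$ extreme points must fit into at most $k-1$ available subsets. This yields $k\leq k-1$, the desired contradiction.

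I expect the substance to reside entirely in the separation estimate $2R\sin(\pi/k)\geq R$ together with the observation that the subset containing the center carries no extreme point; the conjunction of these two facts is exactly what makes the pigeonhole step work. The remaining verifications are routine bookkeeping: condition~(i) of the definition guarantees each $y_j$ is covered, and an extreme point shared by several subsets only tightens the count (each such subset then carries no other extreme point). The single delicate check is that the $k$-symmetric choice genuinely produces $k$ distinct points whose minimal separation is $2R\sin(\pi/k)$. It is precisely this estimate that fails for $k\geq 7$, where adjacent extreme points come closer than $R$, explaining why the statement is restricted to $k\leq 6$.
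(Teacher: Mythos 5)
Your proof is correct and is essentially the paper's own argument: both rest on the $k+1$ points $p,y_1,\dots,y_k$ with mutual distances at least $\min\{R,\,2R\sin(\pi/k)\}\geq R$ for $k\leq 6$, followed by a pigeonhole over the $k$ subsets. The only cosmetic difference is that you run the pigeonhole by contradiction (separating the subset containing $p$ from the rest), whereas the paper states it directly: among $k+1$ mutually $R$-separated points, two must share a subset.
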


\begin{proof}
Let $y_1,\ldots,y_k$ be points in $\ptl C$ such that $d(p,y_i)=R$.
We can assume they are placed $k$-symmetrically along $\ptl C$,
and so they will determine a regular $k$-gon $E_k$ with edge length equal to
$2R\sin(\pi/k)\geq 2R\sin(\pi/6)=R$, since $k\leq 6$.
Then we have that $p,y_1,\ldots,y_k$ are $k+1$ points
with mutual distances greater than or equal to $R$.
Since at least two of these points will be contained in a same subset
given by the subdivision $S$, it follows that $d_M(S)\geq R$.
\end{proof}

\begin{remark}
\label{re:7}
The previous Lemma~\ref{le:cota1-subd} does not hold when $k>6$.
For instance, for $k=7$, consider a regular heptagon $H$ with circumradius $R=1$ and center of symmetry $p$,
and the $7$-subdivision $S$ shown in Figure~\ref{7}
(which is a modification of the $7$-partition of $H$ whose endpoints
are the vertices of $H$ with $p$ as common point).
This $7$-subdivision is given by eleven line segments meeting in threes at five inner points
(placed at the same distance from $p$), with congruent subsets $H_2$, $H_7$,
congruent subsets $H_3$, $H_4$, $H_5$, $H_6$, and $p\in H_1$.
In this case, we have checked that $d_M(S)=D(H_2)=d(a,b)=0.9892<R=1$,
and so Lemma~\ref{le:cota1-subd} is not satisfied.
%Another example for $k=8$ is given by the unit circle (with $R=1$) and the subdivision $S'$ into $k=8$ subsets shown in Figure~\ref{fig:subdivision-circle}, which satisfies $d_M(S')<R=1$.
\end{remark}

\begin{figure}[h]
   \includegraphics[width=0.88\textwidth]{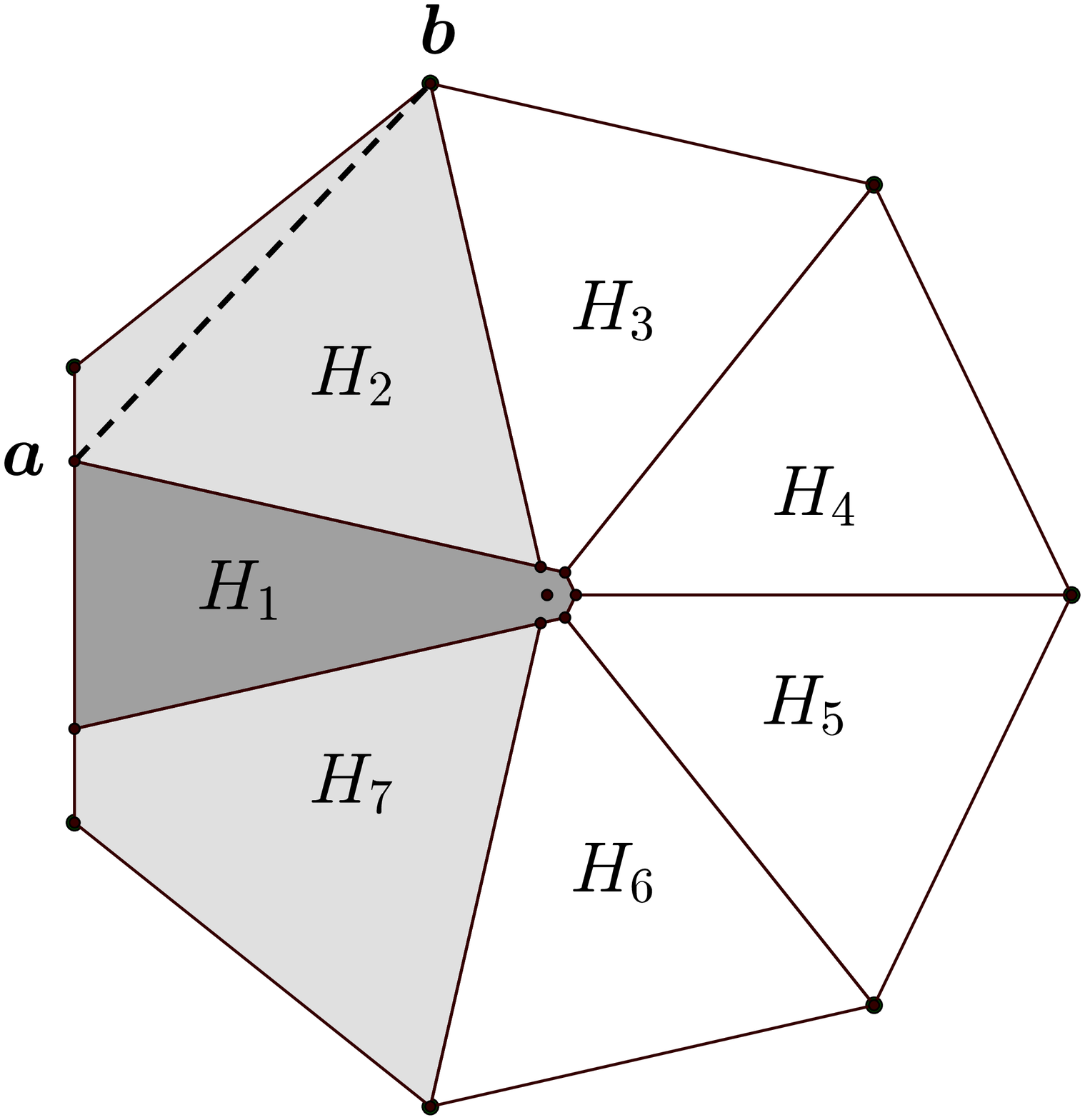}\\
  \caption{A $7$-subdivision of the regular heptagon with %$d_M(S,H)<R(H)$}
  maximum relative diameter striclty less than the circumradius}
  \label{7}
\end{figure}

The following Lemma~\ref{le:cota2} is true not only for $k$-partitions, but also for general $k$-subdivisions.

\begin{lemma}
\label{le:cota2}
Let $C$ be a $k$-rotationally symmetric planar convex body, with inradius $r$, for $k\geq3$ .
Let $S$ be a $k$-subdivision of $C$. Then $d_M(S)\geq 2\,r\sin(\pi/k)$.
\end{lemma}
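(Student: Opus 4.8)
The plan is to restrict the subdivision to the inscribed circle and run a covering--by--arcs argument. By Lemma~\ref{le:rR} the closed disk $D$ of radius $r$ centered at the center of symmetry $p$ is contained in $C$, so its bounding circle $\Gamma=\ptl D$ lies in $C$ and is covered by the subsets: $\Gamma=\bigcup_{i=1}^k A_i$ with $A_i:=C_i\cap\Gamma$ relatively closed. Writing $\ell:=2\,r\sin(\pi/k)$, I would argue by contradiction, assuming $d_M(S)<\ell$; then $D(A_i)\le D(C_i)<\ell$ for every $i$, so it suffices to derive a contradiction from the fact that $\Gamma$ is covered by $k$ closed sets of diameter strictly less than $\ell$.

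First I would convert the Euclidean diameter bound into a bound on angular span. The preliminary observation is that for $k\geq 3$ one has $\ell\le 2\,r\sin(\pi/3)=r\sqrt{3}$, and that any subset of $\Gamma$ not contained in a closed semicircle has diameter at least $r\sqrt{3}$: such a subset contains three points whose three consecutive gaps are each strictly less than $\pi$ and sum to $2\pi$, so the largest gap $\theta$ satisfies $2\pi/3\le\theta<\pi$, and the chord joining its endpoints has length $2\,r\sin(\theta/2)\ge 2\,r\sin(\pi/3)=r\sqrt{3}$. Consequently each $A_i$, having diameter $<\ell\le r\sqrt{3}$, must be contained in a closed semicircular arc. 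Parametrizing that semicircle by angle, let $s_i\in[0,\pi]$ be the angular distance between the extreme points of $A_i$ (attained, since $A_i$ is closed); on a semicircle the chord length is monotone in angular separation, so $D(A_i)=2\,r\sin(s_i/2)$, and $D(A_i)<\ell=2\,r\sin(\pi/k)$ forces $s_i<2\pi/k$.

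I would then close the argument by a measure count: each $A_i$ is contained in a closed arc of angular length $s_i<2\pi/k$, and these $k$ arcs cover $\Gamma$, so the total angular length satisfies $2\pi\le\sum_{i=1}^k s_i<k\cdot(2\pi/k)=2\pi$, a contradiction. Therefore some $A_i$ has diameter at least $\ell$, whence $d_M(S)\ge D(C_i)\ge D(A_i)\ge 2\,r\sin(\pi/k)$, as claimed.

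The main obstacle is precisely this reduction from a Euclidean diameter bound to an angular span bound. The naive pigeonhole used for Lemma~\ref{le:cota1-subd} --- placing the $k$ vertices of the regular $k$-gon inscribed in $\Gamma$ together with $p$ and counting mutually distant points --- does not work here, since the center satisfies $d(p,z_i)=r<\ell$ exactly when $k<6$, so one cannot produce $k+1$ points at pairwise distance $\ge\ell$. This is why the covering argument on $\Gamma$, together with the semicircle reduction (which genuinely uses $\ell\le r\sqrt{3}$), is needed to cover the whole range $k\ge 3$.
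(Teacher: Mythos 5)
Your overall strategy is sound, but one intermediate claim is false as stated and needs a small repair. The claim in question is that any subset of $\Gamma$ not contained in a closed semicircle contains three points whose three consecutive arcs are all strictly less than $\pi$. This fails for sets containing two antipodal pairs: take $A=\{0,\,s,\,\pi,\,s+\pi\}$ in angular coordinates, with $0<s<\pi$. No closed semicircle contains $A$ (a closed semicircle containing both $0$ and $\pi$ must be $[0,\pi]$ or $[\pi,2\pi]$, and neither contains both $s$ and $s+\pi$), yet every triple of points of $A$ has one arc of length exactly $\pi$, not strictly less. Fortunately the statement you want --- that a subset of $\Gamma$ not contained in a closed semicircle has diameter at least $r\sqrt{3}$ --- is still true, and the patch is immediate: if $A_i$ contains an antipodal pair, then $D(A_i)\geq 2r>r\sqrt{3}\geq\ell$, contradicting $D(A_i)<\ell$ directly; if $A_i$ contains no antipodal pair, your three-point argument does go through (place $a_1\in A_i$ at angle $0$, so $\pi\notin A_i$; then $s=\sup\,(A_i\cap(0,\pi))$ and $t=\inf\,(A_i\cap(\pi,2\pi))$ are attained in $A_i$, the interval $(s,t)$ lies inside a single gap of $A_i$, and every gap of a set not contained in a closed semicircle has length less than $\pi$, so the triple $\{0,s,t\}$ has all three arcs strictly less than $\pi$). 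With this case distinction inserted, the remaining steps --- the angular-span estimate $s_i<2\pi/k$ and the covering count $2\pi\leq\sum_{i=1}^k s_i<k\cdot(2\pi/k)=2\pi$ --- are correct.

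Once patched, your proof is a legitimate alternative to the paper's, and in fact more general. The paper anchors its argument on the $k$ symmetrically placed inradius points $x_1,\ldots,x_k\in\ptl C$: either two consecutive ones lie in one subset (giving the chord $2r\sin(\pi/k)$ at once), or else each $x_i$ lies in its own subset $C_i$, and then between consecutive $x_i$'s there are points $q_i$ of the inscribed circle belonging to two adjacent subsets; the angles $\alpha_i$ between consecutive $q_i$'s are each less than $4\pi/k$ and sum to $2\pi$, so some $\alpha_j\geq2\pi/k$, producing a chord of length at least $2r\sin(\pi/k)$ with both endpoints in $C_j$. Your covering argument never uses the rotational symmetry (only that $C$ contains a disk of radius $r$, which any set of inradius $r$ does), so it proves the inequality for arbitrary planar compact sets, whereas the paper needs the symmetry to place the points $x_i$. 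Both are ultimately pigeonhole arguments on the circle of radius $r$, but yours trades the paper's specially chosen points for a measure count over the traces $A_i=C_i\cap\Gamma$, at the cost of the semicircle reduction that created the gap above.
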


\begin{proof}
Let $x_1,\ldots,x_k$ be points in $\ptl C$ with $d(p,x_i)=r$.
% (these points correspond to the k-symmetric inradius segments).
We can assume that they are placed $k$-symmetrically,
and so they determine a regular $k$-gon with edge length $2\,r\sin(\pi/k)$.
Call $C_1,\ldots,C_k$ the subsets of $C$ given by the $k$-subdivision $S$.

If two (consecutive) points $x_\alpha, x_\beta$ belong to a same subset $C_i$,
then $d_M(S)\geq D(C_i)\geq d(x_\alpha,x_\beta)=2\,r\sin(\pi/k)$, and the result follows.
So we will assume that each point $x_i$ only belongs to $C_i$, for $i=1,\ldots,k$.

Let $p$ be the center of symmetry of $C$, and let $K$ be the circle with center $p$ and radius $r$.
Clearly $x_1,\ldots,x_k\in\ptl K$.
Since any pair of consecutive points $x_{i-1}$, $x_{i}$
are contained in different subsets $C_{i-1}$, $C_{i}$, respectively,
there must be a point $q_i\in\ptl K$ between them, with $q_i\in C_{i-1}\cap C_{i}$.
Let $\alpha_i$ denote the angle at $p$ determined by the segments $\overline{p\,q_i}$ and $\overline{p\,q_{i+1}}$.
We have that $\alpha_i\in (0,2\varphi_k)=(0,4\pi/k)$, and $\alpha_1+\ldots+\alpha_k=2\pi$.
Then it follows that at least one angle $\alpha_j$ satisfies $2\pi/k\leq\alpha_j<4\pi/k$.
This implies that $d(q_j,q_{j+1})=2\,r\sin(\alpha_j/2)\geq 2\,r\sin(\pi/k)$,
and so $d_M(S)\geq D(C_j)\geq d(q_j,q_{j+1})\geq 2\,r\sin(\pi/k)$, which proves the statement.
\end{proof}

We can now state the main results of this section, %, which follows from Lemmata~\ref{le:cota1} and~\ref{le:cota2},
asserting that for any $k$-rotationally symmetric planar convex body,
the associated standard $k$-partition is a minimizing $k$-partition for the maximum relative diameter functional,
and also a minimizing $k$-subdivision when $k\leq 6$. Both results are immediate consequences of the previous Lemmata.

\begin{theorem}
\label{th:minimizing}
Let $C$ be a $k$-rotationally symmetric planar convex body, and $P_k$ the associated standard $k$-partition of $C$.
Let $P$ be any $k$-partition of $C$. Then, $d_M(P)\geq d_M(P_k)$.
\end{theorem}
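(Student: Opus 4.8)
The plan is to obtain the statement as an immediate combination of the three preceding lemmata, since all the substantive geometric work has already been carried out there. First I would recall from Lemma~\ref{le:dM} that for the standard $k$-partition one has the explicit value
\[
d_M(P_k) = \max\{R,\, 2\,r\sin(\pi/k)\},
\]
where $r$ and $R$ denote respectively the inradius and circumradius of $C$. The whole argument then reduces to showing that every $k$-partition $P$ of $C$ satisfies the two lower bounds $d_M(P)\geq R$ and $d_M(P)\geq 2\,r\sin(\pi/k)$ simultaneously, since the maximum of these two quantities is precisely $d_M(P_k)$.

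For the first bound I would invoke Lemma~\ref{le:cota1} directly: any $k$-partition $P$ of $C$ (with $k\geq 3$) possesses a common interior point, and the argument there, based on minimizing the sum of squared distances to the $k$ symmetrically placed circumradius points, gives $d_M(P)\geq R$. For the second bound I would observe that a $k$-partition is in particular a $k$-subdivision, so Lemma~\ref{le:cota2} applies and yields $d_M(P)\geq 2\,r\sin(\pi/k)$. Combining these,
\[
d_M(P)\geq \max\{R,\,2\,r\sin(\pi/k)\} = d_M(P_k),
\]
which is exactly the desired inequality, and shows that $P_k$ is a minimizing $k$-partition.

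Since the two bounds come from genuinely different arguments (Lemma~\ref{le:cota1} exploiting the common interior point of a partition, Lemma~\ref{le:cota2} using only the $k$ inradius points and a pigeonhole on the angles $\alpha_i$), there is no real obstacle remaining at this stage: the theorem is a corollary of the lemmata. The only point deserving a moment's care is the clean matching of the two lower bounds against the two terms in the maximum from Lemma~\ref{le:dM}. One must retain \emph{both} inequalities rather than discarding the smaller-looking one, because neither term is always dominant: for the circle one has $r=R$, so $2\,r\sin(\pi/k)>R$ exactly when $k\in\{3,4,5\}$ (where the second bound is binding), while $R$ is the binding term for $k\geq 6$. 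With both inequalities in hand, taking the maximum immediately closes the argument.
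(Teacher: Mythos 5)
Your proposal is correct and follows exactly the paper's own argument: invoke Lemma~\ref{le:dM} for the value $d_M(P_k)=\max\{R,\,2\,r\sin(\pi/k)\}$, then combine Lemma~\ref{le:cota1} (using the common interior point of the $k$-partition) with Lemma~\ref{le:cota2} (applied since every $k$-partition is a $k$-subdivision) to bound $d_M(P)$ below by both terms of that maximum. Your closing observation that neither term dominates in general is a nice sanity check, consistent with the paper's Example~\ref{ex:dMcircle}.
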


\begin{proof}
From Lemma~\ref{le:dM}, we have that $d_M(P_k)=\max\{R,\,2\,r\sin(\pi/k)\}$,
where $R$ and $r$ are the circumradius and the inradius of $C$.
By using Lemmata~\ref{le:cota1} and~\ref{le:cota2}, we conclude that $d_M(P)\geq d_M(P_k)$.
\end{proof}

\begin{theorem}
\label{th:minimizing-subdivision}
Let $C$ be a $k$-rotationally symmetric planar convex body, and $P_k$ the associated standard $k$-partition of $C$,
with $k\leq 6$.
Let $S$ be any $k$-subdivision of $C$. Then $d_M(S)\geq d_M(P_k)$.
\end{theorem}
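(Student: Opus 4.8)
The plan is to deduce the statement directly from Lemma~\ref{le:dM} together with the two lower bounds already established for general $k$-subdivisions, exactly mirroring the proof of Theorem~\ref{th:minimizing} in the $k$-partition case. First I would invoke Lemma~\ref{le:dM} to write $d_M(P_k,C)=\max\{R,\,2\,r\sin(\pi/k)\}$, where $r$ and $R$ denote the inradius and the circumradius of $C$. Thus it suffices to prove that any $k$-subdivision $S$ satisfies both $d_M(S)\geq R$ and $d_M(S)\geq 2\,r\sin(\pi/k)$; taking the maximum of these two inequalities then yields $d_M(S)\geq d_M(P_k)$.

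The inradius bound $d_M(S)\geq 2\,r\sin(\pi/k)$ is precisely Lemma~\ref{le:cota2}, which holds for every $k\geq 3$ and for arbitrary $k$-subdivisions, so no additional argument is needed there. The circumradius bound $d_M(S)\geq R$ is Lemma~\ref{le:cota1-subd}, and here is exactly where the restriction $k\leq 6$ enters: the pigeonhole argument in that lemma produces $k+1$ points $p,y_1,\ldots,y_k$ with mutual distances at least $R$ only when the edge length $2R\sin(\pi/k)$ of the inscribed regular $k$-gon is at least $R$, that is, when $\sin(\pi/k)\geq 1/2$, which fails for $k\geq 7$.

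Combining the two bounds gives $d_M(S)\geq\max\{R,\,2\,r\sin(\pi/k)\}=d_M(P_k)$, as required. I do not expect any genuine obstacle in assembling this final statement, since all the substantive work is contained in the preceding lemmata; the only delicate point, and the precise reason the theorem is restricted to $k\leq 6$, lies in the validity of Lemma~\ref{le:cota1-subd}, whose conclusion is known to fail for $k>6$ as witnessed by the explicit $7$-subdivision of the regular heptagon exhibited in Remark~\ref{re:7}.
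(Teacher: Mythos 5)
Your proposal is correct and follows exactly the paper's own argument: the paper proves this theorem by combining Lemma~\ref{le:dM} with Lemmata~\ref{le:cota1-subd} and~\ref{le:cota2}, precisely as you do. You also correctly identify that the restriction $k\leq 6$ originates in Lemma~\ref{le:cota1-subd}, where the inscribed regular $k$-gon has edge length $2R\sin(\pi/k)\geq R$ only for $k\leq 6$, with the heptagon subdivision of Remark~\ref{re:7} showing failure beyond that range.
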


\begin{proof}
Taking into account Lemmata~\ref{le:dM}, ~\ref{le:cota1-subd} and ~\ref{le:cota2}, the result follows.
\end{proof}

We point out that the previous Theorem~\ref{th:minimizing-subdivision} is the best result
we can obtain for minimizing $k$-subdivisions, since it does not hold for $k\geq 7$,
as shown in the following examples.

\begin{example}
\label{ex:7}
In Remark~\ref{re:7} we considered a regular heptagon $H$ with circumradius $R=1$, and described a $7$-subdivision $S$ with $d_M(S)=0.9892$. As $d_M(P_7)=R=1$ for the standard $7$-partition $P_7$ associated to $H$ (see Example~\ref{ex:rR}), it follows that $P_7$ is not a minimizing $7$-subdivision for the maximum relative diameter.
\end{example}

\begin{example}
\label{ex:8}
Consider the unit circle $C$ and the $8$-subdivision $S$ shown in Figure~\ref{fig:8}, given by an inner subset $C_1$ bounded by a circle of radius $0.43$, and seven congruent subsets $C_2,\ldots,C_8$ determined by $7$-symmetric segments joining $\ptl C$ and $\ptl C_1$. Straightforward computations yield that $D(C_1)=0.86$ and $D(C_2)=0.86$, and so $d_M(S)=0.86$. Thus the corresponding standard $8$-partition $P_8$ is not a minimizing $8$-subdivision for the circle, since $d_M(P_8)=R(C)=1$, see Example~\ref{ex:dMcircle}.
\end{example}

\begin{figure}[h]
    \includegraphics[width=0.7\textwidth]{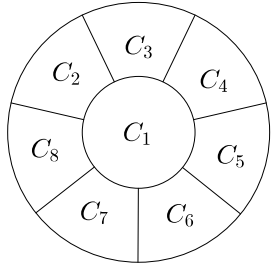}\\
  \caption{A $8$-subdivision of the circle}\label{fig:8}
\end{figure}

\begin{example}
The following idea suggests that when $k\in\nn$ is large enough,
the corresponding standard $k$-partition will not be a minimizing $k$-subdivision
for the maximum relative diameter.
Consider the unit circle, which satisfies $d_M(P_k)\geq1$ for any $k\in\nn$, $k\geq3$, where $P_k$ is the standard $k$-partition, see Example~\ref{ex:dMcircle}. It is clear that, when $k\geq k_0$ for $k_0\in\nn$ large enough,
we can divide the unit circle into $k$ \emph{small} subsets, each one with diameter strictly less than $1$,
thus obtaining a $k$-subdivision with maximum relative diameter strictly less than $1$.
This implies that $P_k$ is not minimizing for $k\geq k_0$.
This reasoning can be probably applied for regular $k$-gons when $k$ is large enough (and then close to be a circle).
In Section~\ref{sec:large} we shall study the minimizing $k$-subdivision for a planar convex body,
for large values of $k\in\nn$.
\end{example}

\begin{remark}
We remark that Theorems~\ref{th:minimizing} and~\ref{th:minimizing-subdivision}
were already obtained in \cite{trisecciones} for the case $k=3$.
\end{remark}

\subsection{Uniqueness of the minimizing $k$-partition}
\label{subsec:uniqueness}
The uniqueness of the solution is also an interesting question for these kinds of optimization problems.
In our setting, where the considered functional is the maximum relative diameter,
the uniqueness of the minimizing $k$-partition does not hold,
as we can see in the example from Figure~\ref{fig:non}.
For the regular hexagon $H$, we exhibit three different minimizing $6$-partitions:
the associated standard $6$-partition $P_6$ (which is minimizing by Theorem~\ref{th:minimizing}),
and two others $6$-partitions obtained by slight modifications of $P_6$,
keeping invariant the value of the maximum relative diameter $d_M(P_6,H)$.
Other similar examples can be constructed for the regular hexagon and, in general,
for any $k$-rotationally symmetric planar convex body (see Figure~\ref{fig:non-circle}).
This shows that the minimizing $k$-partition for any set of our class is not unique,
being not possible to obtain a complete classification of all of them.

\begin{figure}[h]
  \includegraphics[width=0.7\textwidth]{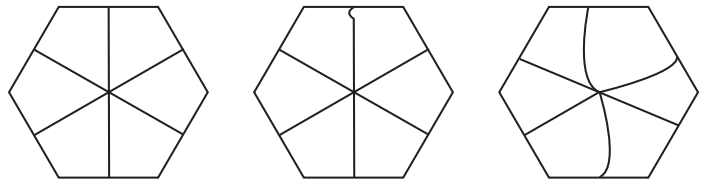}\\
  \caption{Three different minimizing $6$-partitions for the regular hexagon.
  The first one is the corresponding standard $6$-partition $P_6$, and the others are slight modifications.}
  \label{fig:non}
\end{figure}

\begin{figure}[h]
  \includegraphics[width=0.7\textwidth]{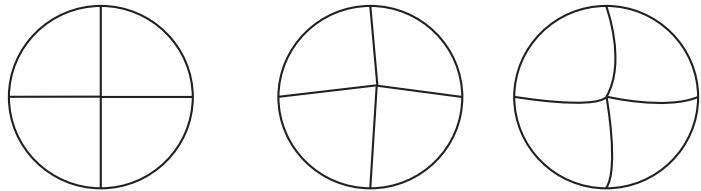}\\
  \caption{Three different minimizing $4$-partitions for the circle.
  The first one is the corresponding standard $4$-partition $P_4$, and the others are slight modifications.}
  \label{fig:non-circle}
\end{figure}

However, some necessary conditions for being a minimizing $k$-partition
can be derived from a deeper analysis of Lemmata~\ref{le:cota1} and~\ref{le:cota2}.
More precisely, let $C$ be a $k$-rotationally symmetric planar convex body,
with center of symmetry $p$, and let $P$ be a minimizing $k$-partition.
From Theorem~\ref{th:minimizing} and Lemma~\ref{le:dM} we have that
$d_M(P)=d_M(P_k)=\max\{R,2\,r\sin(\pi/k)\}$,
where $P_k$ is the standard $k$-partition associated to $C$,
and $R$ and $r$ are the circumradius and the inradius of $C$, respectively.
By discussing the two different possibilities, we shall obtain conditions that
the minimizing $k$-partition $P$ must satisfy.
If $d_M(P_k)=R$, from the proof of Lemma~\ref{le:cota1} we can deduce that
the common point of $P$ has to coincide necessarily with $p$
(otherwise, $d_M(P)$ will be \emph{strictly greater} than $R$).
On the other hand, if $d_M(P_k)=2\,r\sin(\pi/k)$, an analysis of the equality case
in the proof of Lemma~\ref{le:cota2} will yield that the endpoints of $P$
have to coincide with the endpoints of $P_k$.

\section{Optimal sets}
\label{se:optimal}

The search for the optimal sets in geometric optimization problems is, in general, a hard question.
For some problems, it is only possible to obtain some properties of the solutions,
but not a complete description of them.
However, for the problem considered in this paper, we will see that we can give
a precise characterization of the optimal set,
which in this setting corresponds with the $k$-rotationally symmetric planar convex body
attaining the minimum value for the maximum relative diameter.

Our initial motivation resides in the following question:
given a $k$-rotationally symmetric planar convex body $C$,
we know from Theorem~\ref{th:minimizing} that
the lowest value for the maximum relative diameter among all $k$-partitions of $C$
is achieved by the standard $k$-partition $P_k(C)$ associated to $C$,
and it is equal to $d_M(P_k(C),C)$.
Then, a natural further step is investigating for the minimum of such lowest values,
among all sets in the class $\mathcal{C}_k$. Equivalently, we are interested in finding
\[
\min\{d_M(P_k(C),C): C\in\mathcal{C}_k\},
\]
which will consequently determine the optimal body in $\mathcal{C}_k$.

However, in this setting it is more convenient to find
\[
\min\bigg\{\frac{d_M(P_k(C),C)^2}{A(C)}: C\in\mathcal{C}_k\bigg\},
\]
where $A(C)$ denotes the area of $C$, since the quotient
\begin{equation}
\label{eq:quotient}
\frac{d_M(P_k(C),C)^2}{A(C)}
\end{equation}
is invariant under dilations about the center of symmetry of $C$,
and therefore the optimal body will be independent from the enclosed area.

The following Theorem~\ref{th:optimal} provides the characterization of the optimal body in $C_k$,
for each $k\in\nn$, $k\geq 3$. %The proof is based in Lemma~\ref{le:dM}.

\begin{theorem}
\label{th:optimal}
Among all $k$-rotationally symmetric planar convex bodies for fixed $k\in\nn$, $k\geq 3$,
the minimum value in \eqref{eq:quotient} is uniquely attained
by the intersection of the unit circle with the regular $k$-gon
with inradius $1/(2\sin(\pi/k))$, up to dilations.
\end{theorem}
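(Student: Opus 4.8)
The plan is to exploit the scale invariance of the quotient~\eqref{eq:quotient} together with the explicit value $d_M(P_k(C),C)=\max\{R,2r\sin(\pi/k)\}$ from Lemma~\ref{le:dM}, thereby reducing the problem to a one–variable optimization. Since the quotient is invariant under dilations about the center of symmetry, I would first normalize every competitor $C\in\mathcal{C}_k$ so that its circumradius equals $R=1$. Writing $s=\sin(\pi/k)$ and $c=\cos(\pi/k)$, the quotient becomes $\max\{1,2rs\}^2/A(C)$, and it suffices to minimize this quantity over the inradius $r$ and the shape of $C$.

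The geometric core is an upper bound for the area in terms of $r$. Let $x_1\in\ptl C$ realize the inradius, so $d(p,x_1)=r$ by Lemma~\ref{le:rR}. Because the inscribed disk $D(p,r)\subseteq C$ touches $\ptl C$ at $x_1$, the support line of $C$ at $x_1$ must also support $D(p,r)$ there, hence be perpendicular to $\overline{p\,x_1}$ and at distance $r$ from $p$. Applying the $k$ rotational symmetries produces $k$ such support lines, whose intersection is the regular $k$-gon $G_r$ with inradius $r$; convexity then gives $C\subseteq G_r$, and trivially $C\subseteq D_1$, the unit disk. Consequently $A(C)\le A(G_r\cap D_1)=:f(r)$, with equality if and only if $C=G_r\cap D_1$. (Note also that $R\le R(G_r)=r/c$ forces $r\ge c$, so the relevant range is $r\in[c,1]$.)

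Next I would compute $f$ explicitly on this range. For $r\in[c,1]$ the $k$ edge–lines of $G_r$ cut off $k$ pairwise disjoint circular caps of the unit disk, disjointness being exactly the condition $\arccos r\le\pi/k$, i.e. $r\ge c$; hence
\[
f(r)=\pi-k\big(\arccos r-r\sqrt{1-r^2}\big),\qquad f'(r)=2k\sqrt{1-r^2}.
\]
Setting $g(r)=\max\{1,2rs\}^2/f(r)$, I would split at $\rho:=1/(2s)$. On $[c,\rho]$ one has $2rs\le1$, so $g=1/f$ is decreasing. On $[\rho,1]$ one has $g=4s^2r^2/f$, whose derivative has the sign of $2f(r)-rf'(r)$; the decisive cancellation is
\[
2f(r)-rf'(r)=2\big(\pi-k\arccos r\big)\ge0\quad\Longleftrightarrow\quad r\ge c,
\]
so $g$ is increasing there. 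Thus $g$ attains its unique minimum at $r=\rho=1/(2s)$, and by the equality case the minimizer is exactly $C^{\ast}=G_{1/(2s)}\cap D_1$, the intersection of the unit circle with the regular $k$-gon of inradius $1/(2s)$.

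Finally I would treat the regime $k\ge6$: there $\rho=1/(2s)\ge1\ge r$, so $2rs\le1$ throughout $[c,1]$, $g=1/f$ is decreasing, and the minimum sits at $r=1$, i.e. the unit disk, which coincides with $G_{1/(2s)}\cap D_1$ since a $k$-gon of inradius $\ge1$ contains the unit disk. I expect the main obstacle to be the monotonicity on $[\rho,1]$: a priori the numerator $4s^2r^2$ grows while the denominator $f(r)$ also grows toward $\pi$, so the sign of $g'$ is not obvious, and what rescues the argument is precisely the identity $2f-rf'=2(\pi-k\arccos r)$, after which everything reduces to $r\ge c$. A secondary technical point to handle with care is justifying the support–line claim and the disjointness of the caps, together with confirming that equality in $A(C)\le f(r)$ forces $C=C^{\ast}$, which yields the asserted uniqueness up to dilations.
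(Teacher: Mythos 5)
Your proof is correct, but it takes a genuinely different route from the paper's. The paper normalizes the \emph{other} functional: it fixes $d_M(P_k(C),C)=1$ and maximizes the area. Under that normalization, Lemma~\ref{le:dM} yields the two constraints $R\le 1$ and $r\le 1/(2\sin(\pi/k))$ simultaneously, and the tangent-line/rotation argument (which the paper compresses into the phrase ``due to the convexity of $C$'') places \emph{every} competitor inside one fixed body, namely the intersection of the unit disk with the regular $k$-gon of inradius $1/(2\sin(\pi/k))$; maximality of the area and uniqueness are then immediate, with no calculus at all. By normalizing $R=1$ instead, you leave the inradius $r$ as a free parameter, so you are forced to compute $f(r)=A(G_r\cap D_1)$ explicitly (in your notation, $G_r$ the circumscribed regular $k$-gon of inradius $r$ and $D_1$ the unit disk) and to run the monotonicity analysis of $g(r)=\max\{1,2rs\}^2/f(r)$; what rescues this is your identity $2f(r)-rf'(r)=2\left(\pi-k\arccos r\right)$ together with the observation that the containment forces $r\ge\cos(\pi/k)$, which is also exactly the condition making the circular caps disjoint, so the formula for $f$ and the monotonicity hold precisely on the admissible range. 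The trade-off is clear: the paper's proof is a few lines, while yours is longer but yields more information --- the explicit profile of the optimal quotient as a function of the normalized inradius, a transparent explanation of why the minimizer becomes the disk for $k\ge 6$, and a fully spelled-out justification of the containment $C\subseteq G_r$ that the paper only gestures at. Both arguments obtain uniqueness from the same equality case in $A(C)\le A(G_r\cap D_1)$, so your conclusion matches the theorem exactly.
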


\begin{proof}
In order to obtain the minimum value in~\eqref{eq:quotient},
we can fix either the value of the maximum relative diameter or the area of the set.
We shall proceed by fixing the maximum relative diameter,
and so we shall \emph{maximize} the area of the set.

Consider an arbitrary set $C$ in $\mathcal{C}_k$ with $d_M(P_k(C),C)=1$,
where $P_k(C)$ is the standard $k$-partition associated to $C$.
%In view of the quotient~\eqref{eq:quotient}, we then have to maximize the area of $C$.
From Lemma~\ref{le:dM}, it follows that $R(C)\leq 1$ and $2\,r(C)\,\sin(\pi/k)\leq 1$,
where $R(C)$ and $r(C)$ are the circumradius and the inradius of $C$, respectively.
The first inequality implies that $C$ is contained in the unit circle,
and the second one implies that $r(C)\leq 1/(2\sin(\pi/k))$,
which gives that $C$ is contained in the regular $k$-gon with inradius $1/(2\sin(\pi/k))$,
due to the convexity of $C$.
Therefore, $C$ will be contained in the corresponding intersection,
and the set with maximal area under these conditions will be necessarily that intersection.
\end{proof}

\begin{remark}
The previous Theorem~\ref{th:optimal} characterizes the optimal body in $\mathcal{C}_k$ for our problem,
when considering $k$-partitions, for any $k\in\nn$, $k\geq 3$.
We can also study the same question for $k$-subdivisions.
In that case, taking into account Theorem~\ref{th:minimizing-subdivision},
analogous reasonings will yield to the same characterization of the optimal bodies only when $k\leq 6$.
\end{remark}

\subsection{Description of the optimal bodies}
\label{re:description}
We shall now determine the optimal bodies for each $k\in\nn$, $k\geq 3$,
by analyzing the corresponding intersection described in Theorem~\ref{th:optimal} in each particular case.
The precise optimal bodies are depicted in Figure~\ref{fig:optimals}.

For $k\in\{3,5\}$, we have that the value of the inradius $1/(2\sin(\pi/k))$ of the coresponding regular $k$-gon
is strictly less than $1$, and its circumradius is strictly greater than $1$ (see Example~\ref{ex:rR}).
This implies that the regular $k$-gon actually intersects the unit circle,
obtaining in these cases a sort of regular $k$-gons with \emph{rounded corners} as optimal bodies.

For $k=4$, the square with inradius $1/(2\sin(\pi/4))$ has circumradius equal to $1$.
Hence, that square is completely contained in the unit circle and provides the optimal body in this case.

Finally, for $k\geq6$, the inradius $1/(2\sin(\pi/k))$ of the regular $k$-gon $E_k$
is greater than or equal to $1$, which implies that the unit circle is entirely contained in $E_k$.
Therefore, the optimal bodies coincide with the unit circle for $k\geq 6$.

\begin{figure}[h]
  \includegraphics[width=0.9\textwidth]{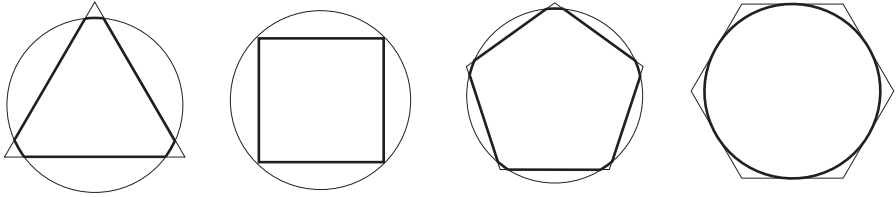}\\
  \caption{Optimal bodies for $k=3$, $k=4$, $k=5$, and $k\geq6$.}
  \label{fig:optimals}
\end{figure}%

\begin{remark}
The optimal body for $k=3$ was already obtained in~\cite[Th.~4.7]{trisecciones} by means of a constructive procedure,
and is a solution for some others geometric optimization problems, see~\cite[\S.~4]{hss}.
\end{remark}

\section{$k$-subdivisions for large values of $k$}
\label{sec:large}

In Section~\ref{sec:main} we have proved that, for each $k$-rotationally symmetric planar convex body,
the associated standard $k$-partition is a minimizing $k$-partition for the maximum relative diameter,
for any $k\in\nn$, $k\geq3$ (Theorem~\ref{th:minimizing}), and also a minimizing $k$-subdivision for $k\in\{3,4,5,6\}$ (Theorem~\ref{th:minimizing-subdivision}).
In this section, we will go further with the study of the $k$-subdivisions problem,
by considering the asymptotic situation % $k\to\infty$, that is,
for large values of $k\in\nn$, and
searching for a minimizing $k$-subdivision for the maximum relative diameter.
%Notice that the limit case $k\to\infty$ corresponds to the circle.

We have already seen that, in general, the standard $k$-partition is not minimizing when $k\geq 7$
(for instance, see Examples~\ref{ex:7} and~\ref{ex:8}).
The reasonings in this section will lead us to conjecture that an optimal $k$-subdivision,
when the number of regions $k$ is large enough, is provided by a \emph{hexagonal tiling}
(up to some subsets touching the boundary of the original set).
This kind of tilings have already appeared in the literature during the last years,
specially from the isoperimetric point of view
(see \cite{Hales} for the least-perimeter subdivision of the plane,
or \cite{bubbles} for the structure of the solutions of the planar soap bubble problem for a large number of regions).
We point out that the assumption of rotational symmetry can be removed,
and so we shall focus on general planar convex bodies.

The following lemma, whose proof relies on the classical isodiametric inequality \cite{bieberbach},
provides a first lower bound for the maximum relative diameter,
when considering $k$-subdivisions of an arbitrary planar convex body.

\begin{lemma}
\label{le:circles}
Let $C$ be a planar convex body, and let $S$ be a $k$-subdivision of $C$, for $k\in\nn$.
%Assume $D(S_i)\leq\rho$ for $i=1,\ldots,k$.
Then,
\begin{equation}
\label{eq:circles}
d_M(S,C)\geq \sqrt{\frac{A(C)}{k}}\sqrt{\frac{4}{\pi}}.
\end{equation}
\end{lemma}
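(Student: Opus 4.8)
The plan is to use the isodiametric inequality to bound the area of each subset in terms of its diameter, then sum over all subsets. The isodiametric inequality states that among all planar sets of fixed diameter, the disk maximizes area; equivalently, for any planar set $X$ with diameter $D(X)$, we have $A(X)\leq \pi\, D(X)^2/4$. First I would apply this inequality to each of the $k$ connected subsets $C_1,\ldots,C_k$ determined by the subdivision $S$. Writing $d_M=d_M(S,C)$ for the maximum relative diameter, each subset satisfies $D(C_i)\leq d_M$ by definition, so the isodiametric inequality gives
\[
A(C_i)\leq \frac{\pi}{4}\,D(C_i)^2\leq \frac{\pi}{4}\,d_M^2.
\]

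Next I would sum these estimates over $i=1,\ldots,k$. Since the subsets cover $C$ and have pairwise disjoint interiors, their areas add up to at least $A(C)$ (in fact exactly $A(C)$, but the inequality $\sum_i A(C_i)\geq A(C)$ is all that is needed and holds even allowing boundary overlaps). This yields
\[
A(C)\leq \sum_{i=1}^k A(C_i)\leq k\cdot\frac{\pi}{4}\,d_M^2.
\]
Solving this inequality for $d_M$ gives $d_M^2\geq 4A(C)/(\pi k)$, and taking square roots produces exactly the claimed bound
\[
d_M(S,C)\geq \sqrt{\frac{A(C)}{k}}\sqrt{\frac{4}{\pi}},
\]
after rewriting $\sqrt{4A(C)/(\pi k)}$ as the stated product of square roots.

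This argument is essentially immediate once the right tool is identified, so there is no genuine obstacle; the only point requiring a little care is the additivity of area, namely that $\sum_{i=1}^k A(C_i)\geq A(C)$. This follows from condition i) in the definition of a $k$-subdivision (the $C_i$ cover $C$), together with the fact that the area of a union is at most the sum of the areas. The disjointness of interiors from condition ii) even makes this an equality, but only the inequality direction is used here. I expect the main subtlety, if any, to be purely notational: the subsets $C_i$ are merely compact and connected, not assumed convex, so it is important that the isodiametric inequality is invoked in its general form valid for arbitrary planar sets rather than the convex case. Since the inequality $A(X)\leq \pi\,D(X)^2/4$ holds for every bounded planar set, the estimate applies to each $C_i$ without any convexity assumption, and the proof goes through directly.
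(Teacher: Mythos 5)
Your proof is correct and follows essentially the same route as the paper: apply the isodiametric inequality to each subset $C_i$, bound each diameter by $d_M(S,C)$, and sum the areas to get $A(C)\leq k\,\tfrac{\pi}{4}\,d_M^2$. The extra care you take about additivity of area and the non-convexity of the $C_i$ is sound but does not change the argument.
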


\begin{proof}
Let $C_1,\ldots,C_k$ be the subsets of $C$ given by $S$, and call $d=d_M(S,C)$.
By using the isodiametric inequality~\cite{bieberbach}, since $D(C_i)\leq d$,
we have that $\displaystyle{A(S_i)\leq\frac{\pi}{4}\,d^2}$, for $i=1,\ldots,k$. Then,
\[
A(C)=\sum_{i=1}^k\,A(C_i)\leq k \frac{\pi}{4}\,d^2,
\]
which yields the statement.
\end{proof}

\begin{remark}
We recall that the equality in the classical isodiametric inequality holds for circles,
and so, the equality in~\eqref{eq:circles} cannot be attained,
since it is not possible to consider a subdivision given by circles
due to the overlappings of the corresponding subsets.
Therefore, \eqref{eq:circles} actually provides a \emph{strict} lower bound
for the maximum relative diameter functional.
\end{remark}

The following Lemma~\ref{le:cotainferior} will allow to improve
the lower bound obtained in the previous Lemma~\ref{le:circles}, for large values of $k$.

\begin{lemma}
\label{le:cotainferior}
Let $C$ be a planar convex body, and let $S$ be a $k$-subdivision of $C$ with $d_M(S,C)=d$.
Then,
\begin{equation}
\label{eq:cc}
A(C)\leq 0.688452\,k\,d^2 + d\,P(C),
\end{equation}
where $P(C)$ denotes the perimeter of $C$.
\end{lemma}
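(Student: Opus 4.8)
The plan is to sharpen the isodiametric estimate of Lemma~\ref{le:circles} by exploiting the constant $0.688452$, which I recognize as the ratio $\area/d^2$ for a regular hexagon of diameter $d$ (a regular hexagon with diameter $d$ has area $\tfrac{\sqrt3}{8}d^2\approx 0.6495\,d^2$, so more precisely this is the area-to-diameter-squared ratio for the \emph{diameter-extremal} hexagon, i.e.\ the largest-area convex set of given diameter having hexagonal symmetry, namely $\area = 0.688452\,d^2$). The point is that while the isoperimetric extremizer of diameter is the disk (Lemma~\ref{le:circles}), disks cannot tile, whereas the sharp bound on the area of a convex set of diameter $d$ that can appear in a \emph{packing} is governed by a hexagon rather than a circle. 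So the key inequality I would invoke is: any convex body $K$ with diameter $D(K)\le d$ satisfies $A(K)\le 0.688452\,d^2$ \emph{provided} one accounts for the packing constraint, not merely the isodiametric one.

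The natural approach is therefore a packing/tiling argument. First I would note that each subset $C_i$ is a connected set of diameter at most $d$, so it is contained in its convex hull $\widehat{C_i}$, a convex body of diameter at most $d$; replacing $C_i$ by $\widehat{C_i}$ only increases area, so it suffices to bound $\sum_i A(\widehat{C_i})$. These convex hulls may overlap, but only near the shared boundaries, and all of them lie inside the convex body $C$ thickened by a collar. The heart of the argument is a result (in the spirit of the hexagonal honeycomb/densest-packing bounds) stating that convex bodies of diameter $\le d$ partitioning a region can occupy area at most $0.688452$ per unit $d^2$, because the extremal shape is the regular hexagon of diameter $d$. I would make this precise by comparing to a Voronoi-type decomposition: assign to each $C_i$ the points of $C$ closer to it, obtaining convex cells tiling $C$; each cell containing a set of diameter $\le d$ has area at most $0.688452\,d^2$ by the hexagonal isodiametric bound for tiles.

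Summing over the $k$ cells that lie in the interior gives the main term $0.688452\,k\,d^2$. The boundary term $d\,P(C)$ accounts for the cells meeting $\partial C$: a cell adjacent to the boundary need not be a full hexagon, and the worst case is a boundary strip of width comparable to $d$ along $\partial C$, whose area is at most a constant times $d\,P(C)$. I would estimate this collar by covering a neighborhood of $\partial C$ of width $O(d)$ and bounding its area by $d\,P(C)$ using the monotonicity of perimeter for the inner parallel bodies (Steiner-type estimate $A(C)-A(C_{-t})\le t\,P(C)$). Combining the interior hexagonal bound with the boundary collar estimate yields $A(C)\le 0.688452\,k\,d^2 + d\,P(C)$.

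The hard part will be making the hexagonal per-tile bound rigorous: one must justify that a convex body of diameter $d$ occurring as a tile has area at most $0.688452\,d^2$, since this is \emph{not} the isodiametric constant $\pi/4\approx 0.785$ but the strictly smaller hexagonal one, and it relies on the fact that tiles of bounded diameter cannot achieve the circular optimum. I expect the authors to handle this either by a direct comparison argument (circumscribing each tile in a bounded-diameter hexagonal cell) or by citing a packing result; the boundary correction $d\,P(C)$ is then a comparatively routine Steiner-formula estimate.
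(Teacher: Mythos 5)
Your proposal hinges on a per-tile bound that is false, and on a misidentification of the constant. First, $0.688452$ is not a hexagonal constant of any kind: in the paper's notation it equals $(m_5+m_7)/2$, the average of the maximal areas $m_5=0.657163$ and $m_7=0.719740$ of a pentagon and a heptagon of unit diameter (values from Mossinghoff's isodiametric results for polygons, cited as~\cite{m}); the diameter-extremal hexagon has area $m_6=0.6749814429$, and the regular hexagon of unit diameter only $3\sqrt{3}/8\approx 0.6495$. Second, and more seriously, the inequality you want to invoke --- that any convex set of diameter at most $d$ occurring as a tile of a subdivision has area at most $0.688452\,d^2$ --- cannot hold with any constant below the isodiametric constant $\pi/4\approx 0.785$: a disk of diameter $d$ can perfectly well occur as one piece of a $k$-subdivision (the paper's Example~\ref{ex:8} is built around exactly such a circular inner piece), and it has area $\pi d^2/4$. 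Your Voronoi variant does not repair this: the Voronoi cells of the pieces of a partition are essentially the pieces themselves (each point lies at distance zero from its own piece), so nothing forces the cells to be polygonal, to have bounded diameter beyond $d$, or to satisfy any sub-isodiametric area bound individually.

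What actually produces the constant is a global combinatorial argument, not a per-tile one. The paper convexifies the pieces, removes overlaps by cutting along line segments, and straightens the arcs lying on $\ptl C$ to obtain convex polygons $C_1',\dots,C_k'$ whose union is a convex polygon $C'\subset C$ with $A(C)\leq \sum_{i=1}^k A(C_i')+d\,P(C)$ (this collar step is the one part your proposal gets essentially right). These polygons form a planar graph with $k+1$ faces; Euler's formula together with the fact that every vertex has degree at least $3$ gives at most $3k-3$ edges, hence $\sum_j j\,f_j<6k$, where $f_j$ counts the pieces with $j$ edges. Each $j$-gon of diameter at most $d$ has area at most $m_j\,d^2$, with $m_j\leq\pi/4$ for all $j$, and one then maximizes $\sum_j f_j\,m_j$ subject to $\sum_j f_j=k$ and $\sum_j j\,f_j\leq 6k$ by linear programming: the optimum is $k(m_5+m_7)/2=0.688452\,k$, attained at $f_5=f_7=k/2$, and \emph{not} by hexagons (indeed $m_5+m_7>2m_6$). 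So a piece with many edges may individually approach the isodiametric area $\pi d^2/4$; it is controlled only because it exhausts the Euler edge budget and must be offset by pieces with few edges. This is also why the paper's Remark~\ref{re:bestbound} notes the bound is strict: a tiling by pentagons and heptagons cannot be realized.
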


\begin{proof}
Call $C_1,\ldots,C_k$ the subsets of $C$ given by $S$.
Let $\widetilde{C_i}$ be the convex hull of $C_i$.
Then $D(\widetilde{C_i})=D(C_i)$ for $i=1,\ldots,k$,
and $C$ is covered by the union of these convex hulls.
%and $C$ is covered by $\displaystyle{\bigcup_{1=1}^k\widetilde{C_i}}$.
Since the possible overlappings can be eliminated by dividing the intersection regions by proper line segments,
we can then assume that $C_1,\ldots,C_k$ are convex polygons up to some arcs contained in $\ptl C$.
By replacing each of these arcs by the line segment determined by its endpoints,
we shall obtain convex polygons $C_i'\subset C_i$, $i=1,\ldots,k$, whose union is a convex polygon $C'$ contained in $C$.
It is clear that $C\backslash C'$ is contained in the union of stripes parallel to $\ptl C'$ and breadth $d$.
This implies
\begin{equation}
\label{eq:ineq}
A(C)\leq A(C')+d\,P(C')\leq A(C')+d\,P(C)=\sum_{i=1}^k A(C_i')+d\,P(C).
\end{equation}

The convex polygons $C_1',\ldots,C_k'$ determine a planar graph with $k+1$ faces
(together with the unbounded component $C_0'$). If $m$ and $n$ denote the number of edges and vertices,
by Euler's theorem we have $(k+1)-m+n=2$.

Since each edge is incident to two vertices and the degree of each vertex
is at least 3 we obtain $\displaystyle 2m \geq 3n$.
It follows $1 = k - m + n \leq k -m + 2/3 m = k - m/3$ and so
\[
m \leq 3 k - 3.
\]

Let $f_j$ be the number of polygons with $j$ edges in $\{C_1',\ldots,C_k'\}$, and $s$ the number of edges of
$C_0$. Then $$2\,m=s+\sum_j f_j\,j$$ and so $$\sum_jf_j\,j< s+\sum_j f_j\,j=2\,m\leq 6\,k-6\leq 6\,k.$$

On the other hand, let $m_j$ be the maximal area of a polygon of $j$ edges with diameter one.
It is known (see~\cite{m}) that $m_j$ is attained by regular polygons for odd $j$, with
$$m_j=j/2\cos(\pi/j)\tan(\pi/(2j)).$$
Furthermore, the values for $m_4$, $m_6$ and $m_8$ are also numerically known (see~\cite{m}),
so that $m_3=0.433012$, $m_4=0.5$, $m_5=0.657163$, $m_6=0.6749814429$, $m_7=0.719740$, $m_8=0.7268684828$, $m_9=0.745619$.
Moreover, notice that $m_j\leq\pi/4$, for any $j\geq3$,
as the corresponding polygon will be contained in the circle of diameter one.
Then, if $C_i'$ is a polygon with $j$ edges, $A(C_i')\leq d^2\,m_j$, and so
\begin{equation}
\label{eq:larga}
\sum_{i=1}^k A(C_i')\leq \sum_j f_j\,m_j\,d^2= d^2\sum_{j}f_j\,m_j
\leq d^2\bigg(\sum_{j=3}^9 f_j\,m_j +\pi/4\sum_{j\geq10} f_j\bigg).
\end{equation}
We are interested in finding an upper bound for \eqref{eq:larga}.
By replacing $\sum_{j\geq10} f_j$ by $\widetilde{f}$ and calling $\widetilde{m}=\pi/4$,
and using linear programming, it is straightforward checking that the maximal value
for $f_3\,m_3+\ldots+f_9\,m_9+\widetilde{f}\,\widetilde{m}$ under the restrictions
$f_3+\ldots+f_9+\widetilde{f}=k$, $3f_3+\ldots+9f_9+10\widetilde{f}\leq 6\,k$, and $f_3,\ldots,f_9,\widetilde{f}\geq0$
is given by $k(m_5+m_7)/2=0.688452\,k$, being attained by $f_5=f_7=k/2$ and $f_3=\ldots=f_9=\widetilde{f}=0$.

Using that bound in \eqref{eq:larga}, we conclude that $\sum_{i=1}^k A(C_i')\leq 0.6884\,k\,d^2$,
which yields the statement taking into account \eqref{eq:ineq}.
\end{proof}

\begin{corollary}
\label{co:cota}
Let $C$ be a planar convex body, and let $S$ be a $k$-subdivision of $C$. Then
\begin{equation}
\label{eq:O}
d_M(S,C)\geq \sqrt{\frac{A(C)}{k}}\sqrt{\frac{1}{0.688452}}-O(1/k).
\end{equation}
\end{corollary}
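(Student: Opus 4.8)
The plan is to derive Corollary~\ref{co:cota} as a direct algebraic consequence of Lemma~\ref{le:cotainferior}, treating the perimeter term as a lower-order correction. Starting from the inequality
\[
A(C)\leq 0.688452\,k\,d^2 + d\,P(C),
\]
where $d=d_M(S,C)$, I would first isolate the leading quadratic behaviour in $d$. Dividing by $k$ and rearranging, the dominant relationship is $d^2\gtrsim A(C)/(0.688452\,k)$, which upon taking square roots gives the main term $\sqrt{A(C)/k}\,\sqrt{1/0.688452}$ appearing in~\eqref{eq:O}. The remaining task is to show that the presence of the linear term $d\,P(C)$ only perturbs this main term by a quantity of order $1/k$.

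The key step is a careful asymptotic analysis of the quadratic inequality in $d$. Writing $a=0.688452\,k$, $b=-P(C)$, and $c=-A(C)$, the bound~\eqref{eq:cc} says that $a\,d^2+b\,d+c\geq 0$, so $d$ must lie above the larger root of the corresponding quadratic,
\[
d\geq \frac{P(C)+\sqrt{P(C)^2+4\,a\,A(C)}}{2\,a}.
\]
I would then expand this root for large $k$. Since $a=0.688452\,k\to\infty$ while $P(C)$ and $A(C)$ are fixed, the square root behaves like $\sqrt{4\,a\,A(C)}\,(1+P(C)^2/(8\,a\,A(C))+\cdots)$, and dividing by $2a$ yields
\[
d\geq \sqrt{\frac{A(C)}{0.688452\,k}}+\frac{P(C)}{2\cdot 0.688452\,k}+O(1/k^{3/2}).
\]
The leading term is exactly $\sqrt{A(C)/k}\,\sqrt{1/0.688452}$, and both correction terms are nonnegative and of order at most $1/k$, so in particular $d$ exceeds the main term minus $O(1/k)$, which is precisely the form asserted in~\eqref{eq:O}.

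Alternatively, and perhaps more transparently for the write-up, I would avoid the quadratic formula altogether by a crude but sufficient estimate: from $A(C)\leq 0.688452\,k\,d^2+d\,P(C)$ one obtains $0.688452\,k\,d^2\geq A(C)-d\,P(C)$, hence
\[
d^2\geq \frac{A(C)}{0.688452\,k}-\frac{d\,P(C)}{0.688452\,k}.
\]
Since $d\leq D(C)$ is bounded above by the fixed diameter of $C$, the subtracted term is $O(1/k)$; taking square roots and using the elementary inequality $\sqrt{u-v}\geq\sqrt{u}-\sqrt{v}$ for $0\leq v\leq u$ (or simply $\sqrt{u-v}\geq \sqrt{u}-O(v/\sqrt{u})$) converts the $O(1/k)$ error under the radical into an $O(1/k)$ error outside it, giving~\eqref{eq:O}. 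The only mild subtlety, and the point I would state carefully, is justifying that $d$ stays bounded above independently of $k$ so that the correction is genuinely $O(1/k)$ and not larger; this follows immediately since every subset $C_i$ is contained in $C$, whence $d=d_M(S,C)\leq D(C)$. With that observation the argument is routine, so I expect no serious obstacle here — the corollary is essentially a bookkeeping consequence of Lemma~\ref{le:cotainferior}, and the main content of the section lies in that lemma rather than in this corollary.
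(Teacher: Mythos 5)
Your plan --- reading the corollary off from Lemma~\ref{le:cotainferior} by elementary algebra --- is exactly the paper's proof, which consists of the single sentence that the statement is a direct consequence of \eqref{eq:cc}; the entire content therefore lies in the algebra you supply, and both of your concrete derivations contain errors. In the quadratic-formula route you set $b=-P(C)$, but \eqref{eq:cc} rearranges to $0.688452\,k\,d^2+P(C)\,d-A(C)\geq 0$, i.e.\ $b=+P(C)$. The correct consequence, with $a=0.688452\,k$, is
\[
d\;\geq\;\frac{-P(C)+\sqrt{P(C)^2+4\,a\,A(C)}}{2a}
\;=\;\sqrt{\frac{A(C)}{0.688452\,k}}\;-\;\frac{P(C)}{2\cdot 0.688452\,k}\;+\;O\big(k^{-3/2}\big),
\]
that is, the main term \emph{minus} a positive correction of order $1/k$ --- which is precisely why the statement \eqref{eq:O} carries the term $-O(1/k)$. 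Your inequality $d\geq\big(P(C)+\sqrt{P(C)^2+4aA(C)}\big)/(2a)$ is instead equivalent to $A(C)+d\,P(C)\leq 0.688452\,k\,d^2$, which does not follow from \eqref{eq:cc}, and your conclusion that the corrections are nonnegative (so that $d$ always exceeds the main term) cannot be extracted from the lemma; had it been true, the corollary would not need the error term at all. Once the sign is fixed, this route is correct and yields the statement.

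The ``crude'' second route, as written, does not prove the corollary: bounding $d\leq D(C)$ gives $v:=d\,P(C)/(0.688452\,k)=O(1/k)$ under the radical, but the induced error \emph{outside} the radical is of size $v/\sqrt{u}$ with $u=A(C)/(0.688452\,k)=\Theta(1/k)$, hence $O(k^{-1/2})$ --- the same order as the main term itself, so the resulting bound can be vacuous (for the unit disk, $D(C)\,P(C)=4\pi$ exceeds $A(C)=\pi$). The alternative $\sqrt{u-v}\geq\sqrt{u}-\sqrt{v}$ is worse still, since $\sqrt{v}=O(k^{-1/2})$. So the subtlety is not, as you suggest, that $d$ is bounded above by $D(C)$; it is that one needs $d=O(k^{-1/2})$ before the linear term becomes negligible at the claimed order. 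The repair is a trivial dichotomy: if $d\geq\sqrt{u}$ there is nothing to prove; otherwise $d<\sqrt{u}$, whence $v\leq\sqrt{u}\,P(C)/(0.688452\,k)$ and
\[
d\;\geq\;\sqrt{u-v}\;\geq\;\sqrt{u}-\frac{v}{\sqrt{u}}\;\geq\;\sqrt{u}-\frac{P(C)}{0.688452\,k},
\]
which is the main term minus $O(1/k)$, as required.
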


\begin{proof}
The statement is a direct consequence of \eqref{eq:cc}.
\end{proof}

\begin{remark}
\label{re:bestbound}
For large values of $k\in\nn$, \eqref{eq:O} gives an improvement of the lower bound in \eqref{eq:circles}.
However, such a bound is \emph{strict}, since a planar tiling by regular pentagons and regular heptagons cannot be realized.
\end{remark}

In the following Lemma~\ref{le:hexagons} we shall consider a particular $k$-subdivision for any planar convex body
and any $k\in\nn$, induced by a proper tiling of the plane by regular hexagons, see Figure~\ref{tiling}.
We shall estimate the maximum relative diameter $d_M$ for these $k$-subdivisions,
obtaining an upper bound for the minimal value of $d_M$,
which we conjecture it is the optimal one for large values of $k\in\nn$.

\begin{lemma}
\label{le:hexagons}
Let $C$ be a planar convex body $C$. For each $k\in\nn$, there exists a $k$-subdivision $S_k$ such that
\begin{equation}
\label{eq:cotahexagonos}
d_M(S_k,C)\leq \sqrt{\frac{A(C)}{k}}\sqrt{\frac{8}{3\sqrt{3}}} + O\bigg(\frac{1}{k}\bigg).
\end{equation}
\end{lemma}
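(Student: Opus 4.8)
The plan is to construct the subdivision $S_k$ explicitly from a hexagonal tiling and then control both the diameter of the typical cell and the boundary correction. First I would tile the plane by regular hexagons of a fixed size, chosen so that each hexagon has area $A(C)/k$ (this fixes the edge length, and hence the diameter of each hexagon, in terms of $A(C)/k$). Recall that a regular hexagon of area $a$ has diameter $D$ satisfying $a=\tfrac{3\sqrt3}{8}D^2$, so $D=\sqrt{a}\sqrt{8/(3\sqrt3)}$; this is precisely the leading term in \eqref{eq:cotahexagonos}. I would then intersect this tiling with $C$: the hexagons lying entirely inside $C$ are kept as cells, and the partial hexagons meeting $\ptl C$ are merged with (or attached to) neighbouring cells so as to produce exactly $k$ connected pieces, none of which has diameter exceeding that of a full hexagon by more than a controlled amount.

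The key numerical step is a counting argument. The number of full hexagons inside $C$ is $A(C)/a + O(\sqrt{k})$, since the cells meeting the boundary form a strip of width $O(D)=O(\sqrt{a})$ around $\ptl C$, and there are $O(P(C)/D)=O(\sqrt{k})$ of them. Thus the tiling already produces roughly $k$ interior cells; by adjusting the size of the hexagons slightly (replacing $a=A(C)/k$ by $a=A(C)/k\cdot(1+O(1/\sqrt k))$) or by subdividing/merging boundary cells, I would arrange to have exactly $k$ connected regions. Because the size correction is of relative order $1/\sqrt{k}$, the resulting diameter of each full hexagon is $\sqrt{A(C)/k}\sqrt{8/(3\sqrt3)}\bigl(1+O(1/\sqrt k)\bigr)$, and I would need to verify that merging a bounded number of adjacent boundary pieces increases the diameter by at most $O(\sqrt{a})=O(\sqrt{A(C)/k})$, which is absorbed into the error.

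The main obstacle will be the boundary regions: partial hexagons cut off by $\ptl C$ can be arbitrarily small, and naively there may be many more or fewer than desired, so some care is required to merge them into the adjacent interior cells without creating a region of large diameter. The cleanest way to handle this is to absorb each truncated boundary hexagon into a neighbouring full hexagon; a union of two adjacent hexagons has diameter at most twice the hexagon diameter, still $O(\sqrt{a})$, and since there are $O(\sqrt k)$ boundary cells the total count is corrected by $O(\sqrt k)$, which can be reconciled to an exact count of $k$ by the size adjustment above. I would then collect the estimates: the diameter of every cell is bounded by $\sqrt{A(C)/k}\sqrt{8/(3\sqrt3)}+O(1/\sqrt k)\cdot\sqrt{A(C)/k}$, and since $\sqrt{A(C)/k}\cdot O(1/\sqrt k)=O(1/k)$ for fixed $A(C)$, this gives exactly the claimed bound \eqref{eq:cotahexagonos}, completing the construction of $S_k$ and the estimate of $d_M(S_k,C)$.
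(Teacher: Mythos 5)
Your construction breaks down at the merging step, and the error analysis there conflates two different orders of magnitude. If you absorb a truncated boundary hexagon into an adjacent full hexagon, the union of two adjacent regular hexagons of diameter $D$ has diameter about $\sqrt{13}\,D/2\approx 1.8\,D$ (take opposite far vertices of the two cells), so the increase is a constant multiple of $D=\Theta\bigl(\sqrt{A(C)/k}\bigr)$, i.e.\ of the same order as the leading term itself. This is not ``absorbed into the error'': the permitted error in \eqref{eq:cotahexagonos} is $O(1/k)$, whereas even the additive increase $O(\sqrt{a})=O(1/\sqrt{k})$ that you invoke is already too large, and the true increase caused by merging is comparable to the main term. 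Any cell produced by merging therefore violates the claimed bound at leading order, so the proof as written fails.

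The repair --- and this is what the paper does --- is to never merge at all. A truncated boundary cell, i.e.\ the intersection of a lattice hexagon with $C$, is a perfectly admissible piece of a $k$-subdivision: it is convex, hence connected, and its diameter is at most that of the full hexagon; nothing requires the pieces to have comparable size. The only genuine issue is the count, and that is settled by pushing your size-adjustment idea slightly further: choose $d_k$ as the unique positive root of $k\,\tfrac{3\sqrt{3}}{8}\,d^2=A(C)+d\,P(C)+\pi d^2$. Every lattice cell meeting $C$ lies in the outer parallel body $C+d_k B$ ($B$ the unit disc), and these cells have disjoint interiors, so Steiner's formula gives $n\,\tfrac{3\sqrt{3}}{8}\,d_k^2\leq A(C)+d_k P(C)+\pi d_k^2=k\,\tfrac{3\sqrt{3}}{8}\,d_k^2$ for the number $n$ of such cells, i.e.\ $n\leq k$; if $n<k$ one simply splits a cell, which only decreases diameters. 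Solving the defining equation asymptotically yields $d_k=\sqrt{A(C)/k}\,\sqrt{8/(3\sqrt{3})}+O(1/k)$, which is exactly \eqref{eq:cotahexagonos}. Your proposal contains all the ingredients of this argument (the hexagonal lattice, the boundary strip of width $O(d)$, the enlargement of the cells to fix the count), but the decision to merge boundary pieces instead of keeping them as cells is a genuine gap that invalidates the estimate.
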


\begin{proof}
Call $H$ the regular hexagon with unit diameter, and so $A(H)=3\sqrt{3}/8$.
For any $d>0$, let $L_d$ be the planar lattice with fundamental cell $d\,H$.
For any $k\in\nn$, let $d_k$ be the unique positive solution of
\begin{equation}
\label{eq:ecuacion}
k\,A(d\,H)=A(C)+d\,P(C)+\pi\,d^2,
\end{equation}
where $P(C)$ denotes the perimeter of $C$.
It is clear that $C$ is covered by the union of the fundamental cells of $L_{d_k}$
with nonempty intersection with $C$.
Call $n$ the number of such fundamental cells, which will be contained in the outer parallel body $C+d_k\,B$,
where $B$ is the planar unit ball. Then, Steiner's formula and \eqref{eq:ecuacion} give
$$n\,A(d_k\,H)\leq A(C+d_k\,B)=A(C)+d_k\,P(C)+\pi\,d_k^2=k\,A(d\,H),$$
and so $n\leq k$. This means that the lattice $L_{d_k}$ induces a $k$-subdivision $S_k$ of $C$
(if $n<k$, we can divide a fundamental cell until having a decomposition of $C$ into $k$ connected subsets)
with $d_M(S_k,C)\leq d_k$. This fact, together with a straightforward estimate of $d_k$ by using \eqref{eq:ecuacion},
proves the statement.
\end{proof}

\begin{remark}
We stress that the $k$-subdivision $S_k$ from Lemma~\ref{le:hexagons} consists of
regular hexagons of diameter $d_k$ in the interior of the set,
and portions of such hexagons when meeting the boundary of the set.
\end{remark}

\begin{figure}[h]
    \includegraphics[width=0.25\textwidth]{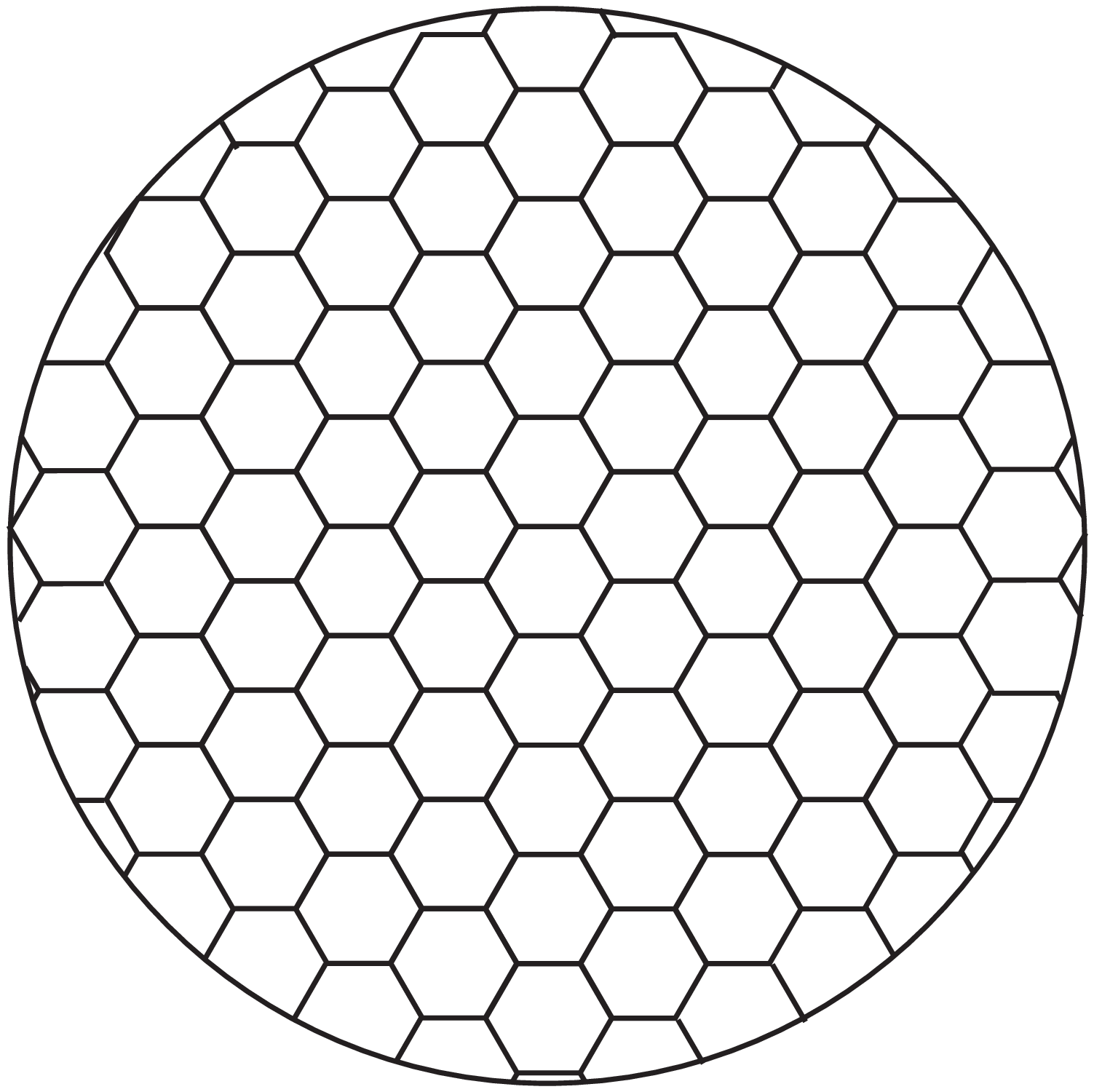}\\
  \caption{A subdivision of the circle given by regular hexagons}
  \label{tiling}
\end{figure}

The previous Lemma~\ref{le:hexagons} gives an upper bound
for the minimum possible value of the maximum relative diameter for large enough values of $k\in\nn$.
However, we think that such a bound is the optimal one due to the following fact:
taking into account Lemma~\ref{le:circles} and using the covering density \cite{densities}
(recall that the subdivision by circles would produce overlappings of the subsets),
it is easy to check that the bound from \eqref{eq:cotahexagonos} can be obtained
from the bound given in \eqref{eq:circles}, when $k$ is large enough.
This means, in some sense, that the hexagonal tiling corrects the defects appearing in the decomposition by circles.
In view of this, we finish this section with the following Conjecture~\ref{conj:infinito},
regarding the minimizing $k$-subdivisions for a planar convex set and large values of $k\in\nn$.

\begin{conjecture}
\label{conj:infinito}
Let $C$ be a planar convex set, and let $S$ be any $k$-subdivision of $C$.
For large enough values of $k\in\nn$, the minimal value for $d_M(S,C)$
is provided by the $k$-subdivision described in Lemma~\ref{le:hexagons},
given by regular hexagons of fixed diameter in the interior of $C$
and portions of such hexagons when meeting the boundary of $C$.
\end{conjecture}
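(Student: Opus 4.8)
Since the final statement is a \emph{conjecture}, the aim of a proof would not be to display an optimal subdivision but to match the upper bound of Lemma~\ref{le:hexagons} with a lower bound of the same leading order, valid for \emph{every} $k$-subdivision. Concretely, the plan is to prove that every $k$-subdivision $S$ of a planar convex body $C$ satisfies
$$
d_M(S,C)\ \geq\ \sqrt{\frac{A(C)}{k}}\,\sqrt{\frac{8}{3\sqrt3}}\ -\ O\!\left(\frac1k\right),
$$
and then to read the conjecture off the two estimates: this lower bound and the upper bound \eqref{eq:cotahexagonos} share the same leading constant $\sqrt{8/(3\sqrt3)}$, so the minimal value of $d_M$ over all $k$-subdivisions is forced to be $\sqrt{A(C)/k}\,\sqrt{8/(3\sqrt3)}\,(1+o(1))$, and the hexagonal subdivision $S_k$ of Lemma~\ref{le:hexagons} realizes it up to the lower-order term. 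The initial reduction would reuse the scheme of Lemma~\ref{le:cotainferior}: passing to the convex hulls $\widetilde{C_i}$, discarding the boundary strip of breadth $d$ at the relative cost $O(d\,P(C))=O(1/k)$, and thereby recasting the problem as an asymptotic \emph{density} statement for the convex polygons $C_i'$ that fill the interior of $C$.

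The decisive step is to sharpen the constant $0.688452$ appearing in \eqref{eq:cc} down to the regular-hexagon value $3\sqrt3/8=0.649519\ldots$, since $1/(3\sqrt3/8)=8/(3\sqrt3)$ is exactly the target constant. The present value $0.688452$ is the optimum of the linear program in the proof of Lemma~\ref{le:cotainferior}, attained by the \emph{non-realizable} configuration $f_5=f_7=k/2$ of area-maximal small pentagons and heptagons. The point I would exploit is that this linear program retains only the combinatorial Euler constraint $\sum_j j\,f_j\le 6k$, while completely ignoring that the cells must actually \emph{tile} a region with asymptotic density tending to $1$. Area-maximal small pentagons and heptagons are essentially regular, and they cannot fit together without leaving gaps; forcing the cells to cover the interior with vanishing total gap should therefore move the extremum off the pentagon-heptagon vertex and onto the realizable regular-hexagon configuration, whose per-cell area is precisely $3\sqrt3/8$. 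Equivalently, one may try to run the heuristic recorded in the remark after Lemma~\ref{le:hexagons}: start from the per-cell isodiametric bound $A(C_i')\le\frac\pi4 d^2$ of Lemma~\ref{le:circles} and correct it by the thinnest-covering density $\tfrac{2\pi}{3\sqrt3}$ of equal circles in the plane (Kershner), which multiplies $4/\pi$ into exactly $8/(3\sqrt3)$.

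The hard part is supplying the extremal input that the elementary linear program lacks. Making either route rigorous amounts to an L.\ Fejes~T\'oth--type theorem: that among all gapless tilings of the plane by convex cells of diameter at most $d$ the average cell area cannot exceed that of the regular hexagon, or, dually, that the hexagonal arrangement gives the thinnest covering relative to diameter. Such statements are genuinely deep, and the elementary ingredients used so far (Euler's formula, the values $m_j$, Steiner's formula) do not reach them; in particular Jung's theorem is too lossy to transfer the circle-covering density to arbitrary diameter-$d$ cells. In addition one would have to control a possible imbalance of cell sizes by an averaging argument, exclude nonconvex or many-sided cells from outperforming the hexagon, and verify that the boundary cells remain asymptotically negligible. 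It is precisely the absence of this extremal tiling result in a form directly applicable here that keeps the statement at the level of a conjecture.
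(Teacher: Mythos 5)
Since this statement is a conjecture, the paper offers no proof, only the same two-sided evidence you assemble: the upper bound \eqref{eq:cotahexagonos} of Lemma~\ref{le:hexagons}, the lower bound of Corollary~\ref{co:cota} with the non-realizable pentagon--heptagon constant $0.688452$ (whose strictness is noted in Remark~\ref{re:bestbound}), and the covering-density heuristic that converts \eqref{eq:circles} into the hexagonal constant $8/(3\sqrt{3})$. Your assessment --- that a proof would require sharpening $0.688452$ in \eqref{eq:cc} to $3\sqrt{3}/8$ via an L.~Fejes~T\'oth-type extremal tiling theorem, which the elementary Euler-formula/linear-programming argument cannot supply --- matches the paper's own reasoning and correctly identifies why the statement is left as a conjecture.
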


\noindent {\bf Acknowledgements.} 
The authors would like to thank the Instituto de Matem\'aticas de la Universidad de Sevilla (IMUS), where part of this work was done.
The first author is partially supported by the research projects MTM2010-21206-C02-01 and MTM2013-48371-C2-1-P,
and by Junta de Andaluc\'ia grants FQM-325 and P09-FQM-5088.
The third author is partially supported by MINECO (Ministerio de Economía y Competitividad)
and FEDER (Fondo Europeo de Desarrollo Regional) project MTM2012-34037.

\end{document}